\@citea\NAT@hyper@{%
     \NAT@nmfmt{\NAT@nm}%
     \hyper@natlinkbreak{\NAT@aysep\NAT@spacechar}{\@citeb\@extra@b@citeb}%
     \NAT@date}}
\@citea\NAT@nmfmt{\NAT@nm}%
\NAT@spacechar\NAT@hyper@{\NAT@date}}{}{}
\@citea\NAT@hyper@{%
     \NAT@nmfmt{\NAT@nm}%
     \hyper@natlinkbreak{\NAT@spacechar\NAT@@open\if*#1*\else#1\NAT@spacechar\fi}%
       {\@citeb\@extra@b@citeb}%
     \NAT@date}}
\@citea\NAT@nmfmt{\NAT@nm}%
\fi\NAT@hyper@{\NAT@date}}
\newcommand{\E}{{\mathbb E}}
\renewcommand{\P}{{\mathbb P}}
\newcommand{\R}{{\mathbb R}}
\renewcommand{\S}{{\mathbb S}}
\newcommand{\N}{{\mathbb N}}
\newcommand{\Acal}{{\mathcal A}}
\newcommand{\Bcal}{{\mathcal B}}
\newcommand{\Ccal}{{\mathcal C}}
\newcommand{\Dcal}{{\mathcal D}}
\newcommand{\Ecal}{{\mathcal E}}
\newcommand{\Gcal}{{\mathcal G}}
\newcommand{\Kcal}{{\mathcal K}}
\newcommand{\Scal}{{\mathcal S}}
\DeclareMathOperator{\rk}{rank}
\newcommand{\Hom}{{\rm Hom}}
\newcommand{\Skew}{{\rm Skew}}
\newcommand{\Pol}{{\rm Pol}}
\newcommand{\Id}{{\mathrm{Id}}}
\DeclareMathOperator{\tr}{Tr}
\DeclareMathOperator{\codim}{codim}
\newtheorem{theorem}{Theorem}
\newtheorem{corollary}[theorem]{Corollary}
\newtheorem{definition}[theorem]{Definition}
\newtheorem{example}[theorem]{Example}
\newtheorem{lemma}[theorem]{Lemma}
\newtheorem{proposition}[theorem]{Proposition}
\newtheorem{remark}[theorem]{Remark}
\numberwithin{equation}{section}
\numberwithin{theorem}{section}
\begin{document}

\title{Polynomial diffusions on compact quadric sets\footnote{The authors would like to thank Guillermo Mantilla-Soler for valuable discussions and suggestions, as well as two anonymous referees for their useful comments. Martin Larsson gratefully acknowledges support from SNF Grant 205121\_163425. The research of Sergio Pulido benefited from the support of the ``Chair Markets in Transition" under the aegis of Louis Bachelier laboratory, a joint initiative of \'Ecole Polytechnique, Universit\'e d'\'Evry Val d'Essonne and F\'ed\'eration Bancaire Fran\c caise, and the project ANR 11-LABX-0019. The research leading to these results has received funding from the European Research Council under the European Union's Seventh Framework Programme (FP/2007-2013) / ERC Grant Agreement n. 307465-POLYTE.}}
\author{Martin Larsson\footnote{ETH Zurich, Department of Mathematics, R\"amistrasse 101, CH-8092, Zurich, Switzerland. Email: martin.larsson@math.ethz.ch} \quad\quad Sergio Pulido\footnote{Laboratoire de Math\'ematiques et Mod\'elisation d'\'Evry (LaMME), Universit\'e d'\'Evry-Val-d'Essonne, ENSIIE, UMR CNRS 8071, IBGBI 23 Boulevard de France, 91037 \'Evry Cedex, France, Email: sergio.pulidonino@ensiie.fr. }}

\maketitle

\begin{abstract}
Polynomial processes are defined by the property that conditional expectations of polynomial functions of the process are again polynomials of the same or lower degree. Many fundamental stochastic processes, including affine processes, are polynomial, and their tractable structure makes them important in applications. In this paper we study polynomial diffusions whose state space is a compact quadric set. Necessary and sufficient conditions for existence, uniqueness, and boundary attainment are given. The existence of a convenient parameterization of the generator is shown to be closely related to the classical problem of expressing nonnegative polynomials---specifically, biquadratic forms vanishing on the diagonal---as a sum of squares. We prove that in dimension $d\le 4$ every such biquadratic form is a sum of squares, while for $d\ge6$ there are counterexamples. The case $d=5$ remains open. An equivalent probabilistic description of the sum of squares property is provided, and we show how it can be used to obtain results on pathwise uniqueness and existence of smooth densities.\\[2ex]

\noindent{\textbf {Keywords:} Polynomial diffusion; sums of squares; biquadratic forms; stochastic invariance; pathwise uniqueness; smooth densities.}
\\[2ex]
\noindent{\textbf {MSC2010 subject classifications:} 60J60, 60H10, 11E25.}
\end{abstract}

\section{Introduction}

Many fundamental stochastic processes appearing in probability theory are {\em polynomial}, meaning that conditional expectations of polynomial functions of the process again have polynomial form; see~\eqref{eq:moments} below. Examples include Brownian motion, geometric Brownian motion, Ornstein-Uhlenbeck processes, squared Bessel processes, Jacobi processes, L\'evy processes, as well as large classes of multidimensional generalizations, such as affine processes and Fisher-Wright diffusions. Polynomial processes have been studied in various degrees of generality by several authors, for instance \citet{Wong:1964}, \citet{Mazet:1997}, \citet{Forman/Sorensen:2008}, \citet{Cuchiero:2011}, \citet{Cuchiero/etal:2012}, \citet{Bakry:2014}, \citet{Bakry/Orevkov/Zani:2014}, \citet{Filipovic/Larsson:2015}. Polynomial processes are also important in applications. In mathematical finance, the polynomial property is useful to build tractable and flexible pricing models in a variety of situations; see for instance \citet{Zhou2003}, \citet{Delbaen/Shirakawa:2002}, \citet{Larsen/Sorensen:2007}, \citet{Gourieroux/Jasiak:2006}, \citet{Cuchiero/etal:2012}, \citet{Filipovic/Gourier/Mancini:2015,Filipovic/Larsson/Trolle:2014}, \citet{Ackerer/Filipovic/Pulido:2016}, and \citet{Filipovic/Larsson:2015}. Every affine diffusion is polynomial; however, non-deterministic affine diffusions do not admit compact state spaces, see~\citet{Kruehner/Larsson:2015}, which may be a drawback in applications. In the present paper we consider polynomial diffusions on (possibly solid) compact quadric sets. This is a natural class of simple state spaces that nonetheless exhibit a rich mathematical structure.

The state space $E$ is defined as follows. Fix $d\in\N$ and let
\[
E = \{x\in\R^d \colon p(x) \ge 0\} \qquad \text{or}\qquad E = \{x\in\R^d \colon p(x) = 0\}
\]
for some $p\in\Pol_2$ such that $E$ is compact and nondegenerate (nonempty and not a point). Here $\Pol_k$ denotes the vector space of polynomial functions on $\R^d$ of total degree less than or equal to~$k$. Up to an affine transformation, $E$ is either the closed unit ball $\Bcal^d$ or the unit sphere $\Scal^{d-1}$; see Section~\ref{S:compact quadrics}.

Next, consider continuous maps $a:\R^d\to\S^d$ and $b:\R^d\to\R^d$ such that
\begin{equation} \label{eq:abpol}
\text{$a_{ij}\in\Pol_2$ and $b_i\in\Pol_1$ for all $i,j\in\{1,\ldots,d\}$},
\end{equation}
where $\S^d$ is the set of $d\times d$ symmetric matrices. We are interested in $E$-valued weak solutions to stochastic differential equations of the form
\begin{equation} \label{eq:SDE}
dX_t = b(X_t)\,dt + \sigma(X_t)\,dW_t,
\end{equation}
where $\sigma:\R^d\to\R^{d\times n}$ for $n\in\N$ is a continuous map with $\sigma\sigma^\top = a$ on $E$, and $W$ is an $n$-dimensional Brownian motion. In particular, we are interested in how probabilistic properties of~\eqref{eq:SDE} are connected to algebraic properties of the coefficients $a$, $b$ and the state space~$E$. 

Diffusions whose coefficients satisfy~\eqref{eq:abpol} are called polynomial. The reason is that~\eqref{eq:abpol} holds if and only if the associated generator $\Gcal$, given by
\begin{equation} \label{eq:G}
\Gcal f(x) = \frac{1}{2}\tr(a(x) \nabla^2 f(x)) + b(x)^\top \nabla f(x),
\end{equation}
preserves polynomials in the sense that $\Gcal\Pol_k\subseteq\Pol_k$ holds for all $k\in\N$. If $X$ is a polynomial diffusion with generator $\Gcal$, a simple argument based on It\^o's formula shows that conditional expectations of polynomials have a particularly simple form: For any $q\in\Pol_k$, one has
\begin{equation}\label{eq:moments}
\E[q(X_T)\mid X_t] = H(X_t)^\top e^{(T-t)G_k}\, \vec q,
\end{equation}
where $H(x)=(h_1(x),\ldots,h_{N_k}(x))$ with $N_k=\dim\Pol_k$ is a basis for $\Pol_k$, and $G_k\in\R^{N_k\times N_k}$ and $\vec q\in\R^{N_k}$ are the corresponding coordinate representations of $\Gcal|_{\Pol_k}$ and $q$, respectively; see \citet[Theorem~3.1]{Filipovic/Larsson:2015} or \citet[Theorem~2.7]{Cuchiero/etal:2012} for details.

Let us summarize the main results of the present paper. Conditions for existence and uniqueness in law of solutions to~\eqref{eq:SDE} can be extracted from known results in the literature, and we briefly review these issues in Section~\ref{S:compact quadrics}. A related question in the case $E=\Bcal^d$ is whether the interior of the state space is stochastically invariant, to which we provide a complete answer; see Proposition~\ref{P:boundary invariant}. Next, in Section~\ref{S:sos} we undertake a more detailed investigation of how to parameterize all weak solutions to~\eqref{eq:SDE}. Theorem~\ref{T:charC} gives a general representation, modulo the requirement that the diffusion matrix $a(x)$ be positive semidefinite on the state space. Characterizing positivity is a difficult problem, which translates into the algebraic question of describing when certain biquadratic forms ${\rm BQ}(x,y)$ are nonnegative. This is closely related to the representability of ${\rm BQ}(x,y)$ as a sum of squares of polynomials. Such a representation always exists if $d\le 4$, but not if $d\ge 6$; see Theorem~\ref{T:sos}. The case $d=5$ remains open. In Section~\ref{S:consequences} we study the probabilistic consequences of the existence of a sum of squares representation. We show that such a representation exists if and only if \eqref{eq:SDE} can be replaced by a certain much more structured SDE; see Theorem~\ref{eq:skBM}. Solutions to this SDE can sometimes be shown to have the pathwise uniqueness property; see Corollary~\ref{C:sphere} and Theorem~\ref{T:pathwise uniqueness}. Moreover, it becomes possible to make detailed assertions regarding the existence of smooth densities by relying on the classical H\"ormander condition; see Theorem~\ref{T:sphere density} and its corollaries. Finally, in Section~\ref{S:algebra}, we collect the algebraic developments needed for the results presented in Section~\ref{S:sos}. Here an important role is played by the so-called Pl\"ucker relations from algebraic geometry.

The following notation will be used throughout this paper: Elements of $\R^d$ are viewed as column vectors. $\Hom_k$ denotes the subspace of $\Pol_k$ consisting of homogeneous polynomials of total degree exactly $k$. For two matrices~$A$ and~$B$ of compatible size, we write $\langle A,B\rangle=\tr(A^\top B)$ for the trace product. The convex cone of positive semi-definite matrices in $\S^d$ is denoted $\S^d_+$, and we often write $A\succeq 0$ to signify that the symmetric matrix $A$ is positive semidefinite. The space of skew-symmetric $d\times d$ matrices is written $\Skew(d)$, and we let $\Skew(2,d)$ denote the subset of rank-two elements. Let $e_i=(0,\ldots,0,1,0,\ldots,0)^\top$ be the $i$th canonical unit vector in $\R^d$. We denote by $D_1,\ldots,D_m$, with $m=\dim\Skew(d)={d\choose 2}$, the elementary skew-symmetric matrices $S_{ij}=e_ie_j^\top - e_je_i^\top$ listed in lexicographic order. That is,
\begin{equation} \label{eq:SijD}
(D_1,\ldots,D_m)=(S_{12},\ldots,S_{1d},S_{23},\ldots,S_{2d},\ldots,S_{d-1,d}).
\end{equation}

\section{Existence, uniqueness, and boundary attainment} \label{S:compact quadrics}

We consider polynomial diffusions whose state space is either the unit ball $\Bcal^d$ or the unit sphere $\Scal^{d-1}$. Up to an affine change of coordinates, this covers all nondegenerate compact quadric sets. Indeed, consider a compact set $E=\{x\in\R^d:p(x)\ge 0\}$ that is nonempty and not a point, where $p\in\Pol_2$. Then $\nabla^2 p$ is constant and negative definite, so by completing the square one sees that $E$ is an affine transformation of $\Bcal^d$. Similarly, the set $\{x\in\R^d\colon p(x)=0\}$ is an affine transformation of~$\Scal^{d-1}$. Note also that the condition~\eqref{eq:abpol} is invariant under affine transformations.

The following sets play an important role in the description of polynomial diffusions on $\Bcal^d$ and $\Scal^{d-1}$:
\begin{equation}\label{eq:Ccal}
\begin{aligned}
\Ccal 	&= \{ c: \R^d\to \S^d : c_{ij} \in \Hom_2 \text{ for all $i,j$, and } c(x)x \equiv 0 \},	\\
\Ccal_+	&= \{ c \in \Ccal : c(x) \in \S^d_+ \text{ for all } x\}.
\end{aligned}
\end{equation}
We have the following crude characterization theorems, most of which follows fairly directly from known results. We start with the unit ball.

\begin{theorem} \label{T:char ball}
Assume $a$ and $b$ satisfy~\eqref{eq:abpol} and the state space is $E=\Bcal^d$. The following conditions are equivalent:
\begin{enumerate}
\item\label{T:char ball:1} There exists a continuous map $\sigma:\R^d\to\R^{d\times d}$ with $\sigma\sigma^\top=a$ on $\Bcal^d$ such that~\eqref{eq:SDE} admits a $\Bcal^d$-valued weak solution for any initial condition in $\Bcal^d$.
\item\label{T:char ball:2} The coefficients $a$ and $b$ are of the form
\begin{equation} \label{T:char ball:eq0}
\begin{aligned}
a(x) &= (1-\|x\|^2)\alpha + c(x), \\
b(x) &= b + Bx,
\end{aligned}
\end{equation}
for some $\alpha\in\S^d_+$, $c\in\Ccal_+$, $b\in\R^d$, and $B\in\R^{d\times d}$ such that
\begin{equation} \label{T:char ball:eq1}
b^\top x + x^\top Bx + \frac{1}{2}\tr(c(x)) \le 0 \quad\text{for all}\quad x\in\Scal^{d-1}.
\end{equation}
\end{enumerate}
\end{theorem}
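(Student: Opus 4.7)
The plan rests on the standard stochastic invariance characterization for the compact set $\Bcal^d=\{p\ge 0\}$ with $p(x)=1-\|x\|^2$. The Stroock--Varadhan necessary conditions for $\Bcal^d$-invariance of a continuous diffusion with generator $\Gcal$ read
\[
a(x)\nabla p(x)=0 \quad\text{and}\quad \Gcal p(x)\ge 0 \quad\text{for all } x\in\{p=0\}=\Scal^{d-1},
\]
which, since $\nabla p(x)=-2x$ and $\nabla^2 p=-2I$, simplify to $a(x)x=0$ on $\Scal^{d-1}$ and $-\tr(a(x))-2 b(x)^\top x\ge 0$ on $\Scal^{d-1}$. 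Under \eqref{eq:abpol} and $a\succeq 0$ on $\Bcal^d$, these are also sufficient for existence of a $\Bcal^d$-valued weak solution via standard results for polynomial diffusions on compact basic semi-algebraic state spaces; see, e.g., \citet{Filipovic/Larsson:2015}. The proof then consists in translating these two conditions into the algebraic form in \eqref{T:char ball:eq0}--\eqref{T:char ball:eq1} and back.

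For the direction \ref{T:char ball:1}$\Rightarrow$\ref{T:char ball:2}, linearity of $b$ is immediate from \eqref{eq:abpol}. For $a$, I would observe that $a(x)x$ is a polynomial vector of total degree at most three which vanishes on $\Scal^{d-1}$; since the ideal of polynomials vanishing on $\Scal^{d-1}$ is generated by $1-\|x\|^2$, each component is divisible by it, and a degree count yields $a(x)x=(1-\|x\|^2)(u+Vx)$. Evaluating at $x=0$ forces $u=0$, and matching the degree-one terms gives $V=\alpha:=a(0)$. Setting $c(x):=a(x)-(1-\|x\|^2)\alpha$, the relation $c(x)x\equiv 0$ holds by construction; homogeneity of degree two of $c$ then reduces to showing that the linear part $a_1$ of $a$ vanishes, which follows from the polynomial identity $a_1(x)x\equiv 0$ (the degree-two matching term) combined with the symmetry of $a_1(x)$ via a short index computation. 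Positive semidefiniteness of $\alpha=a(0)$ is immediate, and $c(x)\succeq 0$ for all $x\in\R^d$ follows from $c(x)=a(x)\succeq 0$ on $\Scal^{d-1}$ combined with homogeneity of degree two. Finally, rewriting $\Gcal p(x)\ge 0$ on $\Scal^{d-1}$ in terms of $\alpha,c,b,B$ (using $\tr(a(x))=\tr(c(x))$ on $\Scal^{d-1}$) yields exactly \eqref{T:char ball:eq1}.

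For the converse \ref{T:char ball:2}$\Rightarrow$\ref{T:char ball:1}, positive semidefiniteness of $a(x)$ on $\Bcal^d$ is immediate since both summands $(1-\|x\|^2)\alpha$ and $c(x)$ are in $\S^d_+$ there. Taking $\sigma(x)$ to be the continuous positive semidefinite square root of $a(x)$ on $\Bcal^d$ and extending continuously to $\R^d$ (e.g.\ by composing with the radial retraction onto $\Bcal^d$) produces a continuous $\sigma:\R^d\to\R^{d\times d}$ with $\sigma\sigma^\top=a$ on $\Bcal^d$. The invariance conditions $a(x)x=(1-\|x\|^2)\alpha x+c(x)x=0$ and $\Gcal p(x)\ge 0$ on $\Scal^{d-1}$ hold directly, the latter being \eqref{T:char ball:eq1} in disguise, so a $\Bcal^d$-valued weak solution exists by the cited existence results. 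The main obstacle is the algebraic extraction of the form \eqref{T:char ball:eq0}, specifically the vanishing of $a_1$, which requires exploiting both $a_1(x)x\equiv 0$ and the symmetry $a_1(x)=a_1(x)^\top$ via a careful index symmetrization; everything else is either a direct verification or an appeal to stochastic-invariance and existence results already available in the literature.
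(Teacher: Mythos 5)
Your overall strategy coincides with the paper's: both reduce condition \ref{T:char ball:1} to the three invariance conditions ($a\succeq 0$ on $\Bcal^d$, $a(x)x=0$ on $\Scal^{d-1}$, $\Gcal p\ge 0$ on $\Scal^{d-1}$ for $p(x)=1-\|x\|^2$) and then translate these into \eqref{T:char ball:eq0}--\eqref{T:char ball:eq1}. The difference is that the paper outsources the algebraic translation entirely to \citet[Proposition~7.1]{Filipovic/Larsson:2015}, whereas you reprove it; your argument there is correct and complete in outline: divisibility of $a(x)x$ by $1-\|x\|^2$ (the ideal of the sphere is principal), evaluation at $x=0$ and degree matching to get $V=\alpha=a(0)$, the index computation showing that a symmetric matrix-valued linear form $a_1$ with $a_1(x)x\equiv 0$ vanishes (symmetry in the first two indices plus antisymmetry in the last two forces all coefficients to zero), and degree-two homogeneity to upgrade $c\succeq 0$ from $\Scal^{d-1}$ to all of $\R^d$.

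The one point you gloss over is the sufficiency direction at the boundary of the parameter set. The existence theorem you implicitly invoke (\citet[Theorem~6.3]{Filipovic/Larsson:2015}) requires the inequality in \eqref{T:char ball:eq1} to be \emph{strict}; when equality holds somewhere on $\Scal^{d-1}$ your blanket appeal to ``standard results'' does not literally apply. The paper closes this gap either by invoking the viability results of \citet{DaPrato/Frankowska:2007} (Example~2.5 and Theorem~2.2 there) or by verifying the positive maximum principle directly from the three conditions and applying \citet[Theorem~4.5.4]{Ethier/Kurtz:2005}. This is a small but genuine omission in your write-up; the rest is sound.
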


\begin{proof}
It is proved in~\citet[Proposition~7.1]{Filipovic/Larsson:2015} that \ref{T:char ball:2} is equivalent to the conditions
\begin{equation} \label{T:char ball:eq2}
\text{$a(x)\succeq 0$ on $E$}, \quad \text{$a(x)x=0$ on $\Scal^{d-1}$}, \quad \text{$\Gcal p(x)\ge0$ on $\Scal^{d-1}$,}
\end{equation}
where $p(x)=1-\|x\|^2$ and $\Gcal$ is given by~\eqref{eq:G}. That \ref{T:char ball:1} implies \eqref{T:char ball:eq2} follows from Theorem~6.1 in \citet{Filipovic/Larsson:2015}. The reverse implication follows from Theorem~6.3 and Proposition~7.1 in \citet{Filipovic/Larsson:2015} when the inequality in~\eqref{T:char ball:eq1} is strict. For the general case one can, for instance, invoke Example~2.5 and Theorem~2.2 in \citet{DaPrato/Frankowska:2007}. Alternatively, one can use~\eqref{T:char ball:eq2} to verify the positive maximum principle directly, and then apply \citet[Theorem~4.5.4]{Ethier/Kurtz:2005}.
\end{proof}

If $d=1$, the polynomial diffusions on $\Bcal^1=[-1,1]$ are simply the Jacobi processes $dX_t=(b+BX_t)dt+\sigma\sqrt{1-X_t^2}dW_t$. In this case $\Ccal_+=\Ccal=\{0\}$. In contrast, for $d>1$ the set $\Ccal_+$ is non-trivial, and its elements $c$ induce diffusive fluctuations tangentially to the sphere due to the property $c(x)x\equiv0$. For example, our results imply that for $d=2$ one can always represent polynomial diffusions on the unit ball as solutions to the SDE
\[
dX_t = (b+BX_t) dt + \sqrt{1-X_{1t}^2-X_{2t}^2}\, \sigma \begin{pmatrix} dW^1_t \\ dW^2_t \end{pmatrix} + \nu \begin{pmatrix} X_{2t} \\ - X_{1t} \end{pmatrix} dW^{3}_t
\]
for some $b\in\R^2$, $B\in\R^{2\times 2}$, $\sigma\in\R^{2\times 2}$, and $\nu\in\R$. Here the presence of the term involving the third Brownian motion $W^3$ gives rise to tangential diffusion. In higher dimensions, explicit parameterizations become more difficult to obtain. Indeed, Theorem~\ref{T:char ball} provides no explicit description of~$\Ccal_+$, and a significant part of the present paper is devoted to studying this set in detail. This is the topic Section~\ref{S:sos}.

It is frequently of interest to know whether a solution~$X$ to~\eqref{eq:SDE} that starts in the interior of $\Bcal^d$ remains in the interior. This question of boundary attainment is resolved by the following result, which is a refinement of Theorem~5.7 in \citet{Filipovic/Larsson:2015}. In its statement, we assume that $a(x)$ and $b(x)$ satisfy~\eqref{eq:abpol} along with the two equivalent conditions in Theorem~\ref{T:char ball}. In particular, \eqref{T:char ball:eq1} is satisfied. Let $\P_x$ denote the law of the solution $X$ to~\eqref{eq:SDE} with initial condition $x\in\Bcal^d$. A measurable subset $D\subseteq \Bcal^d$ is said to be {\em invariant for $X$} if, for every $x\in D$, one has $\P_x(X_t\in D\text{ for all }t\ge0)=1$.

\begin{proposition} \label{P:boundary invariant}
The interior $\Bcal^d\setminus\Scal^{d-1}$ is invariant for $X$ if and only if
\begin{equation} \label{bdry attain}
b^\top x + x^\top (B+\alpha)x + \frac{1}{2}\tr(c(x)) \le 0 \quad\text{for all}\quad x\in\Scal^{d-1}.
\end{equation}
\end{proposition}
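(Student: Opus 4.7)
The plan centers on the test function $q(x) = -\log p(x)$, where $p(x) = 1 - \|x\|^2$ cuts out $\Scal^{d-1}$, and on determining when the generator $\Gcal q$ is bounded above on the open ball. Write $F(x) = b^\top x + x^\top(B+\alpha)x + \tfrac12\tr(c(x))$ for the left-hand side of \eqref{bdry attain}. From $a(x) = p(x)\alpha + c(x)$, the identity $x^\top c(x)x = 0$ (which follows from $c(x)x\equiv 0$, hence $x^\top a(x)x = p(x) x^\top\alpha x$), and a direct computation of \eqref{eq:G} applied to $q$, one gets
$$\Gcal q(x) \;=\; \tr\alpha \;+\; \frac{2F(x)}{p(x)}, \qquad x\in\Bcal^d\setminus\Scal^{d-1}.$$
Thus the proposition reduces to the claim that $F/p$ is bounded above on the interior if and only if $F\le 0$ on $\Scal^{d-1}$.

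For the ``if'' direction I would first show that \eqref{bdry attain} produces a constant $C$ with $F - Cp \le 0$ on all of $\Bcal^d$: the inequality already holds on $\Scal^{d-1}$, and any interior critical point $x^\star$ of $F - Cp$ must satisfy $\nabla F(x^\star) = -2C x^\star$, so that $\|x^\star\| = O(1/C)$; since $F(0)=0$ and $p(0)=1$, the value $F(x^\star) - Cp(x^\star)$ tends to $-\infty$ as $C\to\infty$. Hence $\Gcal q \le K$ for some constant $K$, and the classical McKean-type argument closes the estimate: with $\tau_n = \inf\{t: p(X_t)\le 1/n\}$ and $\tau_\infty = \inf\{t: X_t\in\Scal^{d-1}\}$, It\^o's formula applied to $q(X_{\cdot\wedge\tau_n})$ gives
$$(\log n)\,\P_x(\tau_n\le t) \;\le\; \E_x\!\left[q(X_{t\wedge\tau_n})\right] \;\le\; q(x) + K t,$$
and letting $n\to\infty$ yields $\P_x(\tau_\infty<\infty)=0$ whenever $p(x)>0$.

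For the ``only if'' direction I would argue the contrapositive: if $F(x_0) > 0$ at some $x_0\in\Scal^{d-1}$, then from a suitable interior starting point $\Scal^{d-1}$ is hit with positive probability. The cleanest reduction is to the one-dimensional semimartingale $Y_t = p(X_t)$, which by It\^o has drift $\Gcal p(X_t)$ and quadratic variation $4 p(X_t)\, X_t^\top\alpha X_t\,dt$ (again using $x^\top c(x)x=0$). Near $x_0$, $Y$ behaves like a squared Bessel process of effective dimension $\Gcal p(x_0)/(x_0^\top\alpha x_0) = 2 - 2F(x_0)/(x_0^\top\alpha x_0) < 2$, so the Feller test supplies a positive hitting probability. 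Equivalently, one may work with the test function $\phi(x) = 1 - p(x)^\beta$ for $\beta\in(0,1)$ so small that $F(x_0) - \beta\, x_0^\top\alpha x_0 > 0$; a short calculation gives
$$\Gcal \phi(x) \;=\; -\beta\, p(x)^{\beta-1}\bigl[\Gcal p(x) + 2(\beta-1)\,x^\top\alpha x\bigr] \;\sim\; 2\beta\, p(x)^{\beta-1}\bigl(F(x_0) - \beta x_0^\top\alpha x_0\bigr)$$
as $x\to x_0$, which blows up to $+\infty$. Starting sufficiently close to $x_0$, this strong upward push on the bounded quantity $\phi(X_\cdot)$ can only be reconciled with $\phi\le 1$ by $p(X_\cdot)$ actually reaching $0$.

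The main obstacle is this necessity direction: both formulations require a delicate localization, since the process may escape any prescribed neighborhood of $x_0$ along the sphere before $p(X)$ reaches~$0$. The $p^\beta$-argument must therefore be coupled with stopping at the first exit from a small spherical cap around $x_0$ and with a quantitative lower bound on the probability of hitting $\Scal^{d-1}$ before leaving that cap. A separate difficulty is the degenerate case $\alpha x_0=0$: both the diffusion coefficient of $Y_t$ and one term in $\Gcal p(x_0)$ vanish at $x_0$, so the Bessel comparison breaks down, and one must instead exploit $\Gcal p(x_0) = -2F(x_0) < 0$ directly to produce a deterministic inward drift on $Y_t$ that forces it to reach $0$ before $X_t$ can leave a neighborhood where this drift is active.
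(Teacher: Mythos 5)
Your sufficiency direction is correct and is essentially the paper's proof. The identity $\Gcal(-\log p)=\tr\alpha+2F/p$ is the same as the paper's \eqref{eq:att_ref0} in disguise, and the McKean-type estimate (stopping at $\tau_n$, or equivalently the submartingale convergence theorem) is exactly the argument used there. The one place you deviate is in proving that \eqref{bdry attain} on $\Scal^{d-1}$ yields $F\le Cp$ on all of $\Bcal^d$: your compactness/critical-point argument works, while the paper gets the explicit constant $\kappa=\|b\|$ by scaling $x\mapsto x/\|x\|$ and Cauchy--Schwarz. Either is fine.

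The necessity direction is where your proposal has a genuine, and indeed self-acknowledged, gap: neither the Bessel comparison for $Y=p(X)$ nor the $p^\beta$ supermartingale argument is carried out, and the localization you flag (controlling the exit from a spherical cap around $x_0$ and extracting a quantitative hitting probability) is precisely the hard part. Two remarks. First, the degenerate case $\alpha x_0=0$ that you single out as a separate difficulty cannot occur: the standing assumption \eqref{T:char ball:eq1} gives $\Gcal p(x_0)=-2F(x_0)+2x_0^\top\alpha x_0\ge 0$ on $\Scal^{d-1}$, so $F(x_0)>0$ forces $x_0^\top\alpha x_0\ge F(x_0)>0$. This is not a minor point --- it is what guarantees that the effective Bessel dimension is well defined and strictly less than $2$, and that the diffusion coefficient of $Y$ is uniformly nondegenerate near $x_0$; the paper records ``$\Gcal p(x)\ge 0$'' for exactly this reason. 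Second, the paper does not reprove this direction from scratch: having checked that $\Gcal p\ge0$ and that $2\Gcal p-h^\top\nabla p<0$ at some boundary point, it invokes Theorem~5.7(iii) of \citet{Filipovic/Larsson:2015}, which contains the localized comparison argument you sketch. So your plan is pointed in the right direction, but as written it stops exactly where the real work (or the citation) begins.
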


\begin{proof}
Let $p(x)=1-\|x\|^2$ and note that
\[
a(x)\nabla p(x) = p(x)h(x) \quad \text{where} \quad  h(x)=-2\alpha x.
\]
Furthermore,
\begin{equation}\label{eq:att_ref0}
2\,\Gcal p(x) - h(x)^\top \nabla p(x) = -2p(x)\tr(\alpha) - 4\left(b^\top x + x^\top (B+\alpha)x + \frac{1}{2}\tr(c(x))\right).
\end{equation}
If \eqref{bdry attain} fails, then the right-hand side of~\eqref{eq:att_ref0} is strictly negative for some $x\in\Scal^{d-1}$. Also, $\Gcal p(x)\ge0$ due to~\eqref{T:char ball:eq1}. Thus \citet[Theorem~5.7(iii)]{Filipovic/Larsson:2015} implies that $\Bcal^d\setminus\Scal^{d-1}$ is not invariant for $X$ (a look at the proof of that result shows that its assumption that $\{t:X_t\in\Scal^{d-1}\}$ has Lebesgue measure zero is not needed when $X_0\notin\Scal^{d-1}$).

Suppose now~\eqref{bdry attain} holds, and let $X_0=x_0\in\Bcal^d\setminus\Scal^{d-1}$. Define $\tau=\inf\{t: X_t\in\Scal^{d-1}\}$. We must show that $\P_{x_0}(\tau<\infty)=0$. For $t\in[0,\tau)$, It\^o's formula yields
\[
\log p(X_t) = \log p(x_0) + \int_0^t \frac{2\,\Gcal p(X_s) - h(X_s)^\top \nabla p(X_s)}{2p(X_s)}\,ds + \int_0^t \frac{\nabla p(X_s)^\top \sigma(X_s)}{p(X_s)}\,dW_s.
\]
Suppose we can find a constant $\kappa_1>0$ such that
\begin{equation} \label{eq:bdry attain 1}
2\,\Gcal p(x) - h(x)^\top \nabla p(x) \ge -2\kappa_1 p(x)\quad\text{for all}\quad x\in\Bcal^d.
\end{equation}
Then $\log p(X_t) + \kappa_1t$ is a local submartingale on $[0,\tau)$, bounded from above on bounded time intervals. The submartingale convergence theorem then implies that $\lim_{t\to T\wedge\tau}\log p(X_t)$ exists in $\R$ for any deterministic $T<\infty$, so that $T<\tau$. We deduce that $\tau=\infty$, as desired. This is the same ``McKean's argument'' as in the proof of \citet[Theorem~5.7(i)--(ii)]{Filipovic/Larsson:2015}. A suitable version of the submartingale convergence theorem can found in, for instance, \citet[Lemma~4.14]{Larsson/Ruf:2014}.

It remains to argue~\eqref{eq:bdry attain 1}. Since $\tr(c(x))$ is a quadratic form in $x$, we can find $C\in\S^d$ such that $\tr(c(x))=x^\top Cx$. Define $\Sigma=-\alpha-\frac{1}{2}(B+B^\top + C)\in\S^d$. In view of~\eqref{eq:att_ref0}, condition~\eqref{eq:bdry attain 1} then becomes
\begin{equation} \label{eq:bdry attain 11}
x^\top \Sigma x - b^\top x \ge -\kappa (1-\|x\|^2) \quad\text{for all}\quad x\in\Bcal^d,
\end{equation}
where the constant $\kappa$ is related to $\kappa_1$ by $2\kappa=\kappa_1-\tr(\alpha)$. On the other hand, the assumption~\eqref{bdry attain} written in terms of $\Sigma$ becomes
\begin{equation} \label{eq:att_ref1}
x^\top\Sigma x - b^\top x \ge 0 \quad\text{for all}\quad x\in\Scal^{d-1}.
\end{equation}
We claim that~\eqref{eq:att_ref1} in fact implies that~\eqref{eq:bdry attain 11} holds for $\kappa=\|b\|$. For $x=0$ the statement is obvious. For $x\ne0$, define $\overline x=x/\|x\|$ and apply~\eqref{eq:att_ref1} to get $\overline x^\top \Sigma \overline x \ge b^\top \overline x$. Multiplying both sides by $\|x\|^2$, rearranging, and applying the Cauchy-Schwartz inequality gives
\[
x^\top \Sigma x - b^\top x  \ge -b^\top x (1-\|x\|)  \ge -\|b\| \|x\| (1-\|x\|) \ge -\|b\| (1-\|x\|^2),
\]
as required.
\end{proof}

The case of the sphere is simpler. In particular, the question of boundary attainment becomes vacuous.

\begin{theorem} \label{T:char sphere}
Assume $a$ and $b$ satisfy~\eqref{eq:abpol} and the state space is $E=\Scal^{d-1}$. The following conditions are equivalent:
\begin{enumerate}
\item\label{T:char sphere:1} There exists a continuous map $\sigma:\R^d\to\R^{d\times d}$ with $\sigma\sigma^\top=a$ on $\Scal^{d-1}$ such that~\eqref{eq:SDE} admits an $\Scal^{d-1}$-valued weak solution for any initial condition in $\Scal^{d-1}$.
\item\label{T:char sphere:2} The coefficients $a$ and $b$ satisfy $a = c$ on $\Scal^{d-1}$ for some $c \in \Ccal_+$, and $b(x) = Bx$ for some $B\in\R^{d\times d}$, where
\begin{equation} \label{eq:C:char sphere}
2 x^\top B x + \tr(c(x)) \equiv 0.
\end{equation}
\end{enumerate}
\end{theorem}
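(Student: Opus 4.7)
The plan is to mirror the proof of Theorem~\ref{T:char ball}, but with the sphere realized as the two-sided constraint $\Scal^{d-1} = \{p\ge 0\}\cap\{-p\ge 0\}$ where $p(x)=1-\|x\|^2$. Applying \citet[Theorem~6.1]{Filipovic/Larsson:2015} separately to each inequality turns the invariance conditions for $\{p\ge 0\}$ and $\{-p\ge 0\}$ into the three local requirements
\[
a(x)\succeq 0,\qquad a(x)x = 0,\qquad \tr(a(x)) + 2\,b(x)^\top x = 0 \qquad \text{on } \Scal^{d-1},
\]
the last of which is $\Gcal p=0$ on the sphere (the two one-sided inequalities collapse to an equality). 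Given these, \citet[Theorem~6.3 and Proposition~7.1]{Filipovic/Larsson:2015}, or alternatively the positive maximum principle together with \citet[Theorem~4.5.4]{Ethier/Kurtz:2005}, produces a continuous $\sigma$ with $\sigma\sigma^\top=a$ on $\Scal^{d-1}$ and a weak solution to~\eqref{eq:SDE}, giving the implication (ii)$\Rightarrow$(i) immediately once (ii) is checked to imply the three conditions above.

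For (i)$\Rightarrow$(ii), the main work is algebraic: translating the three conditions into the structure asserted in~\ref{T:char sphere:2}. Decompose $a$ by homogeneous degree as $a=a^{(2)}+a^{(1)}+a^{(0)}$ and write $b(x)=b_0+Bx$. Since each component of $a(x)x$ is a polynomial vanishing on $\Scal^{d-1}$, it lies in the ideal generated by $1-\|x\|^2$, i.e.\ $a(x)x = (1-\|x\|^2)\, r(x)$ for a vector-valued polynomial~$r$ of degree at most one. Sorting this identity by total degree (degrees $1,2,3$ on the left and $0,1,2,3$ on the right) forces the constant part of $r$ to vanish, yields $a^{(1)}(x)x\equiv 0$, and identifies
\[
c(x)\ :=\ a^{(2)}(x) + \|x\|^2\, a^{(0)}
\]
as a homogeneous symmetric matrix-valued polynomial with $c(x)x\equiv 0$. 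Since $a^{(1)}(x)$ is a symmetric matrix of \emph{linear} forms and its entries vanish on $\Scal^{d-1}$ (a linear form vanishing on a spanning set is zero), one concludes $a^{(1)}\equiv 0$, which in turn gives $a=c$ on $\Scal^{d-1}$; positivity $c\in\Ccal_+$ follows from $a\succeq 0$ on the sphere together with the homogeneity of $c$.

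The analogous bookkeeping handles the drift. On $\Scal^{d-1}$ the scalar polynomial $\tr(c(x))+2b_0^\top x+2x^\top Bx$ vanishes and has total degree at most two with no constant term, so it equals $\lambda(1-\|x\|^2)$ for some $\lambda\in\R$; comparing constant terms forces $\lambda=0$, whence the polynomial vanishes identically. Its degree-one part $2b_0^\top x$ then gives $b_0=0$, and the degree-two part is exactly~\eqref{eq:C:char sphere}. The one genuinely delicate point is the degree-by-degree extraction of the canonical $c\in\Ccal_+$ from $a$, and in particular the upgrade of $c(x)x\equiv 0$ from the sphere to all of $\R^d$; once this algebraic reduction is in place the rest of the argument is a direct translation of the ball case to the two-sided setting.
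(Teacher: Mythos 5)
Your overall route is the same as the paper's at the top level (reduce both directions to the three invariance conditions $a\succeq0$, $a(x)x=0$, $\Gcal p=0$ on $\Scal^{d-1}$ via the cited results of Filipovi\'c and Larsson), but you then carry out the algebraic translation into the form \ref{T:char sphere:2} by hand, via a degree-by-degree analysis of the ideal membership $a(x)x=(1-\|x\|^2)r(x)$. The paper instead short-circuits this by observing that \ref{T:char sphere:1} forces the conditions of Theorem~\ref{T:char ball}\ref{T:char ball:2} with \emph{equality} in~\eqref{T:char ball:eq1}, and then reads off $a=c$ on $\Scal^{d-1}$, $b=0$, and~\eqref{eq:C:char sphere} from the already-established ball parameterization (i.e.\ it outsources your algebra to \citet[Proposition~7.1]{Filipovic/Larsson:2015}). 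Your self-contained version is a legitimate alternative and makes visible exactly where each structural feature of \ref{T:char sphere:2} comes from; the drift bookkeeping ($\lambda=0$ by comparing constant terms, then $b_0=0$ from the degree-one part) and the positivity argument for $c$ by homogeneity are both correct.

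There is one step whose stated justification does not hold up: you claim the entries of $a^{(1)}$ ``vanish on $\Scal^{d-1}$,'' but nothing you derived gives that directly, so the parenthetical about linear forms vanishing on a spanning set is applied to a hypothesis you have not established. The conclusion $a^{(1)}\equiv0$ is nevertheless true, and it follows from what you \emph{did} derive, namely $a^{(1)}(x)x\equiv0$ together with the symmetry of $a^{(1)}(x)$: writing $a^{(1)}_{ij}(x)=\sum_k A_{ijk}x_k$ with $A_{ijk}=A_{jik}$, the identity $\sum_{j,k}A_{ijk}x_jx_k\equiv0$ gives $A_{ijk}=-A_{ikj}$, and a tensor symmetric in $(i,j)$ and antisymmetric in $(j,k)$ vanishes ($A_{ijk}=A_{jik}=-A_{jki}=-A_{kji}=A_{kij}=A_{ikj}=-A_{ijk}$). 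Note that symmetry is essential here: without it, $M(x)x\equiv0$ does not force $M\equiv0$ (compare the paper's remark after Lemma~\ref{L:BQC+}). With that repair the argument is complete.
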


\begin{proof}
\citet[Theorem~6.3]{Filipovic/Larsson:2015} yields that~\ref{T:char sphere:2} implies~\ref{T:char sphere:1}. For the reverse implication, one argues as in the proof of Theorem~\ref{T:char ball} that~\ref{T:char sphere:1} implies Theorem~\ref{T:char ball}\ref{T:char ball:2}, but with equality in~\eqref{T:char ball:eq1}. This in turn implies that $a(x)=c(x)$ for $x\in\Scal^{d-1}$ and that $b=0$, whence~\eqref{eq:C:char sphere} holds.
\end{proof}

Theorems~\ref{T:char ball} and~\ref{T:char sphere} do not make any uniqueness statements. However, since the state space~$E$ is compact, the polynomial property implies that uniqueness in law for~\eqref{eq:SDE} always holds. More precisely, we have the following result:

\begin{lemma} \label{L:law}
Assume $a$ and $b$ satisfy~\eqref{eq:abpol}, let $\sigma:\R^d\to\R^{d\times n}$, $n\in\N$, be a continuous map with $\sigma\sigma^\top=a$ on $E$, and let $x\in E$. Let $X$ be an $E$-valued weak solution to~\eqref{eq:SDE} with initial condition $X_0=x$. Then the law of $X$ is uniquely determined by $a$, $b$, and $x$.
\end{lemma}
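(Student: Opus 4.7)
The plan is to exploit the polynomial property together with compactness of $E$ to show that all finite-dimensional distributions of $X$ are uniquely pinned down by $a$, $b$, and $x$, from which uniqueness in law on path space follows by standard arguments.

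First, I would observe that for any polynomial $q\in\Pol_k$, the process
\[
M_t^{T,q} = H(X_t)^\top e^{(T-t)G_k}\,\vec q
\]
is a martingale on $[0,T]$ under any $E$-valued weak solution to~\eqref{eq:SDE}. This is verified by It\^o's formula: using the identity $\Gcal H = G_k^\top H$ (the defining relation of the matrix representation $G_k$ of $\Gcal|_{\Pol_k}$ in the basis $H$), the bounded-variation part of $dM_t^{T,q}$ vanishes. Since $E$ is compact, $H(X)$ is bounded, so $M^{T,q}$ is a true martingale. Evaluating at $t=0$ and $t=T$ yields the stronger conditional form
\[
\E[q(X_T)\mid \Fcal_t] = H(X_t)^\top e^{(T-t)G_k}\,\vec q,
\]
where the right-hand side depends only on $X_t$ and on $a,b$ through $G_k$; this is essentially the statement of \citet[Theorem~3.1]{Filipovic/Larsson:2015}.

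Next, I would show by induction on $n$ that for any $0\le t_1<\cdots<t_n$ and any polynomials $q_1,\ldots,q_n$, the mixed moment $\E_x[q_1(X_{t_1})\cdots q_n(X_{t_n})]$ depends only on $a$, $b$, and $x$. The case $n=1$ is immediate from the formula above at $t=0$. For the inductive step, condition on $\Fcal_{t_{n-1}}$ and use the martingale identity to replace $\E[q_n(X_{t_n})\mid \Fcal_{t_{n-1}}]$ by the polynomial $H(X_{t_{n-1}})^\top e^{(t_n-t_{n-1})G_k}\vec q_n$ in $X_{t_{n-1}}$; absorbing this polynomial into $q_{n-1}$ reduces the problem to a mixed moment of length $n-1$, to which the inductive hypothesis applies.

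Finally, since $E$ is compact, the Stone--Weierstrass theorem implies that polynomials are dense in $C(E^n)$ for every $n$, so knowing all mixed polynomial moments determines every finite-dimensional distribution of $X$ on $E^n$. Because $X$ has continuous paths, the finite-dimensional distributions determine the law of $X$ on $C(\R_+;\R^d)$, completing the proof. No step poses a real obstacle; the only point requiring minor care is the passage from conditional-on-$X_t$ (as stated in~\eqref{eq:moments}) to conditional-on-$\Fcal_t$, which is why I invoke the martingale argument directly rather than quoting~\eqref{eq:moments} verbatim.
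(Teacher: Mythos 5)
Your proposal is correct and follows essentially the same route as the paper, which simply cites \citet[Corollaries~3.2 and~5.2]{Filipovic/Larsson:2015}: the moment formula determines all joint polynomial moments of the finite-dimensional marginals, and compactness of $E$ (via Stone--Weierstrass) upgrades this to uniqueness of the finite-dimensional distributions and hence of the law on path space. Your explicit martingale argument and the induction on mixed moments are exactly the content of the cited results, so there is nothing to add.
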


\begin{proof}
See \citet[Corollary~5.2]{Filipovic/Larsson:2015}. The argument relies on the fact that $a$, $b$, and $x$ uniquely determine all joint moments of all finite-dimensional marginal distributions of $X$ due to~\eqref{eq:moments}; see \citet[Corollary~3.2]{Filipovic/Larsson:2015} for details. Due to the compactness of $E$, this in turn determines the law of $X$.
\end{proof}

\begin{remark} \label{R:pwunique}
So far no assertions have been made regarding pathwise uniqueness of solutions to~\eqref{eq:SDE}. Pathwise uniqueness depends on the choice of $\sigma$, and the existence of a suitable $\sigma$ is a delicate question in general. See \citet[Remark~2.3]{Spreij/Veerman:2012} for an instructive discussion in connection with affine diffusions on parabolic sets. Sometimes pathwise uniqueness is immediate: If $a(x)$ is positive definite on the interior $\Bcal^d\setminus\Scal^{d-1}$ then $\sigma(x)=a(x)^{1/2}$ is locally Lipschitz there, which by standard arguments yields pathwise uniqueness up to the first hitting time of the boundary. In Section~\ref{S:pathwise} we discuss other situations where pathwise uniqueness can be established.
\end{remark}

\section{Tangential diffusion, biquadratic forms, and sums of squares} \label{S:sos}

In this section we undertake a detailed analysis of the linear space $\Ccal$ and the convex cone $\Ccal_+$ appearing in Theorems~\ref{T:char ball} and~\ref{T:char sphere}, and whose role is to generate diffusive movements tangentially to $\Scal^{d-1}$. It tuns out that a complete description is difficult to obtain in general; see Theorem~\ref{T:sos} below. Fortunately, a partial characterization suitable for applications is within reach. We start with some general remarks, introducing some concepts and notation along the way.

Any map $c:\R^d\to\S^d$ whose components lie in $\Hom_2$ induces a map ${\rm BQ}:\R^d\times\R^d\to\R$ via the expression
\begin{equation} \label{eq:BQcx}
{\rm BQ}(x,y) = y^\top c(x) y.
\end{equation}
This map is a {\em biquadratic form}: a quadratic form in $x$ for each fixed $y$, and a quadratic form in $y$ for each fixed $x$. If $c\in\Ccal_+$, then additionally ${\rm BQ}(x,y)\ge 0$ and ${\rm BQ}(x,x)= 0$ for all $(x,y)$. The following lemma states that the converse is true as well.

\begin{lemma} \label{L:BQC+}
The set $\Ccal_+$ is in one-to-one correspondence with the set of all nonnegative biquadratic forms vanishing on the diagonal. The correspondence is given by~\eqref{eq:BQcx}.
\end{lemma}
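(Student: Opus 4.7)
The plan is to establish the bijection by carefully defining the map and its inverse and verifying that each direction preserves the three defining properties (homogeneity of degree two in entries, positive semidefiniteness, and the constraint $c(x)x\equiv 0$ on one side; bilinearity, nonnegativity, and diagonal vanishing on the other).

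For the forward direction, I would fix $c\in\Ccal_+$ and set ${\rm BQ}(x,y)=y^\top c(x)y$. Since each entry $c_{ij}$ lies in $\Hom_2$, the expression ${\rm BQ}(x,y)=\sum_{i,j}c_{ij}(x)y_iy_j$ is a quadratic form in $y$ whose coefficients are quadratic forms in $x$, hence a biquadratic form. Nonnegativity ${\rm BQ}(x,y)\ge 0$ is immediate from $c(x)\succeq 0$, and the constraint $c(x)x\equiv 0$ yields ${\rm BQ}(x,x)=x^\top c(x)x=0$. Injectivity of $c\mapsto{\rm BQ}$ is then a consequence of the fact that a symmetric matrix is uniquely determined by its quadratic form: if $y^\top c(x)y=y^\top \tilde c(x)y$ for all $y$, polarization gives $c(x)=\tilde c(x)$ for every $x$.

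For the reverse direction, I would start from a nonnegative biquadratic form ${\rm BQ}$ vanishing on the diagonal and define $c(x)$ to be the unique symmetric matrix representing the quadratic form $y\mapsto{\rm BQ}(x,y)$; explicitly, $c_{ij}(x)=\tfrac12(\partial_{y_i}\partial_{y_j}{\rm BQ})(x,\cdot)$. The biquadratic structure ensures $c_{ij}\in\Hom_2$, and nonnegativity of ${\rm BQ}(x,\cdot)$ for every fixed $x$ gives $c(x)\in\S^d_+$.

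The only slightly nontrivial step is to recover the condition $c(x)x\equiv 0$ from the vanishing on the diagonal ${\rm BQ}(x,x)=0$. For this I would use that $c(x)\succeq 0$ admits a Cholesky-type factorization $c(x)=L(x)^\top L(x)$; then $x^\top c(x)x=\|L(x)x\|^2=0$ forces $L(x)x=0$, hence $c(x)x=L(x)^\top L(x)x=0$. Equivalently, one can argue directly: if $M\succeq 0$ and $x^\top Mx=0$ then $x$ minimizes the nonnegative quadratic form $v\mapsto v^\top Mv$, so its gradient $2Mx$ vanishes. This last implication is the only place where some (minor) care is required, and I do not anticipate any serious obstacle in the proof.
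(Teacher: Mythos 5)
Your proposal is correct and follows essentially the same route as the paper: the forward direction is immediate, the inverse is obtained from the second $y$-derivatives, and the key step $c(x)x\equiv 0$ is deduced from $x^\top c(x)x=0$ together with $c(x)\succeq 0$ via a factorization $c(x)=L(x)^\top L(x)$ (the paper uses $L(x)=c(x)^{1/2}$). No gaps.
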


\begin{proof}
Any biquadratic form ${\rm BQ}(x,y)$ in $d+d$ variables induces a map $c:\R^d\to\S^d$ with components in $\Hom_2$ by the formula $c_{ij}(x)=\partial^2_{y_iy_j}{\rm BQ}(x,y)$ for $i\ne j$, and $c_{ii}(x)=\partial^2_{y_iy_i}{\rm BQ}(x,y)/2$. Furthermore, if ${\rm BQ}(x,y)\ge 0$ for all $(x,y)$, then $c(x)\succeq 0$ for all $x$. In this case, ${\rm BQ}(x,x)=0$ implies $\|c(x)^{1/2}x\|^2=x^\top c(x)x=0$, and hence $c(x)x=0$, for all~$x$. Thus $c\in\Ccal_+$.
\end{proof}

\begin{remark}
Note that the set $\Ccal$ {\em does not} correspond one-to-one with the set of all (not necessarily nonnegative) biquadratic forms vanishing on the diagonal. The reason is that $x^\top c(x) x\equiv 0$ does not imply $c(x)x\equiv0$ in the absence of positive semidefiniteness. An example is the map
\[
c(x) = \begin{pmatrix} -2x_1x_2 & x_1^2 \\ x_1^2 & 0 \end{pmatrix},
\]
which satisfies $x^\top c(x)x\equiv 0$ but not $c(x)x\equiv 0$. It will become apparent that the relevant stepping stone toward an understanding of $\Ccal_+$ is the set $\Ccal$, not the set of biquadratic forms vanishing on the diagonal.
\end{remark}

Our first main result is a characterization of $\Ccal$. Let $m={d \choose 2}=\dim\Skew(d)$. For any $H=(h_{pq})\in\S^m$, define a map $c_H$ by
\begin{equation} \label{eq:cH(x)}
c_H(x) = \sum_{p,q=1}^m h_{pq} \,D_p\,xx^\top D_q^\top,
\end{equation}
where $D_1,\ldots,D_m$ are given by \eqref{eq:SijD}. It is clear that $c_H\in\Ccal$, and it turns out that every element of $\Ccal$ is in fact of this form. Naively one might then conjecture that the dimension of $\Ccal$ equals $\dim\S^m={m+1\choose 2}$, the number of free parameters $h_{pq}$ appearing in~\eqref{eq:cH(x)}. However, this turns out to be incorrect. Indeed, there exist linear relations among the maps $D_p\,xx^\top D_q^\top$ in~\eqref{eq:cH(x)}; to capture them, we consider the linear space
\begin{equation} \label{eq:DK}
\Kcal = \left\{ H \in \S^m : c_H(x)\equiv 0 \right\}.
\end{equation}
The proof of the following theorem is provided in Section~\ref{S:proof_charC}.

\begin{theorem} \label{T:charC}
The set $\Ccal$ and its dimension are given by
\begin{equation} \label{T:charC:1}
\Ccal = \left\{ c_H : H \in\S^m\right\}
\qquad\text{and}\qquad
\dim \Ccal = 2{d\choose 4} + 3{d\choose 3} + {d\choose 2} = \frac{d^2(d^2-1)}{12},
\end{equation}
where $m={d\choose 2}$ and we set ${d \choose k}=0$ for $k>d$. Moreover, $\dim \Kcal = \dim\S^m-\dim\Ccal= {d\choose 4}$.
\end{theorem}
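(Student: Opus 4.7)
The plan is to establish three assertions in sequence: (a) the inclusion $\{c_H:H\in\S^m\}\subseteq\Ccal$; (b) the closed formula $\dim\Ccal=d^2(d^2-1)/12$; and (c) the identity $\dim\Kcal=\binom{d}{4}$. Combined with rank-nullity applied to the linear map $\phi:H\mapsto c_H$, these will yield both the parameterization $\Ccal=\{c_H:H\in\S^m\}$ and the announced dimensions. Step (a) is immediate: $c_H(x)$ is symmetric because $H$ is, and since $D_q$ is skew we have $x^\top D_q^\top x=-x^\top D_q x=0$, so
\[
c_H(x)x=\sum_{p,q}h_{pq}(D_p x)\,(x^\top D_q^\top x)=0.
\]

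For (b), set $V=\R^d$ and identify the ambient space of $\S^d$-valued maps with $\Hom_2$ entries with the $GL(V)$-module $\mathrm{Sym}^2 V^*\otimes\mathrm{Sym}^2 V^*$. The condition $c(x)x\equiv0$ defines $\Ccal$ as the kernel of the equivariant linear map $\psi(c)(x,y):=y^\top c(x)x$ from $\mathrm{Sym}^2 V^*\otimes\mathrm{Sym}^2 V^*$ into $\mathrm{Sym}^3 V^*\otimes V^*$. Pieri's rule decomposes both spaces into irreducible $GL(V)$-summands,
\[
\mathrm{Sym}^2 V^*\otimes\mathrm{Sym}^2 V^*\cong \mathrm{Sym}^4 V^*\oplus W_{(3,1)}\oplus W_{(2,2)},\qquad \mathrm{Sym}^3 V^*\otimes V^*\cong\mathrm{Sym}^4 V^*\oplus W_{(3,1)},
\]
where $W_\lambda$ denotes the Schur-functor irreducible of type $\lambda$. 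Testing $\psi$ on $c(x)=(x^\top x)I$ produces $\nabla(\|x\|^4/4)$, a nonzero element of the $\mathrm{Sym}^4 V^*$ summand, and testing on $c(x)=(x^\top A x)I$ for a non-scalar symmetric $A$ produces $(x^\top A x)x$, which is not a gradient and hence has a nonzero $W_{(3,1)}$ component. Schur's lemma then forces $\psi$ to restrict to an isomorphism on each of $\mathrm{Sym}^4 V^*$ and $W_{(3,1)}$ and to annihilate $W_{(2,2)}$, so $\Ccal\cong W_{(2,2)}$; the Weyl dimension formula gives $\dim\Ccal=d^2(d^2-1)/12$.

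For (c), the space $\S^m=\mathrm{Sym}^2(\Skew(d))=\mathrm{Sym}^2(\Lambda^2 V^*)$ admits the classical plethysm decomposition
\[
\mathrm{Sym}^2(\Lambda^2 V^*)\cong W_{(2,2)}\oplus\Lambda^4 V^*.
\]
The map $\phi:H\mapsto c_H$ is $GL(V)$-equivariant with image in $\Ccal\cong W_{(2,2)}$, and is not identically zero (for instance the identity $H=I_m\in\S^m$ gives $c_{I_m}\neq 0$), so by Schur's lemma it must kill the $\Lambda^4 V^*$ summand and restrict to an isomorphism on $W_{(2,2)}$. This simultaneously yields $\Ccal=\{c_H:H\in\S^m\}$ and $\Kcal\cong\Lambda^4 V^*$, with $\dim\Kcal=\binom{d}{4}$. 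Concretely, $\Kcal$ is spanned by Plücker-type relations: for each $4$-subset $\{i<j<k<l\}\subseteq\{1,\ldots,d\}$, the symmetric $H\in\S^m$ carrying entries $+1,-1,+1$ on the three pairs of index-pairs $\{(ij),(kl)\}$, $\{(ik),(jl)\}$, $\{(il),(jk)\}$ (and zero elsewhere) satisfies $c_H\equiv 0$, giving $\binom{d}{4}$ manifestly linearly independent elements of $\Kcal$.

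The main obstacle is the verification that $\psi$ meets both the $\mathrm{Sym}^4 V^*$ and $W_{(3,1)}$ isotypic components, since that is precisely what enables Schur's lemma to pin down $\ker\psi$; all other steps are formal consequences of $GL(V)$-representation theory. A more algebraic alternative, closer to the Plücker-geometric flavor flagged by the paper, would bypass representation theory entirely by (i) showing directly that the $\binom{d}{4}$ Plücker-type elements are linearly independent members of $\Kcal$, and (ii) establishing by a direct monomial-count argument that $\dim\Ccal\ge d^2(d^2-1)/12$; rank-nullity then closes the gap.
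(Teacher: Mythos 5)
Your argument is correct, but it takes a genuinely different route from the paper. The paper works entirely with elementary linear algebra: it introduces the map $T:c\mapsto c(x)x$ from the space $\Dcal$ of $\S^d$-valued quadratic maps to the space $\Ecal$ of $\R^d$-valued cubic maps, proves $T$ is surjective by an explicit construction, computes $\dim\Ccal=\dim\ker T=\dim\Dcal-\dim\Ecal$ by rank--nullity, and then exhibits an explicit collection of $2\binom{d}{4}+3\binom{d}{3}+\binom{d}{2}$ linearly independent elements of $\{c_H\}$ via a careful monomial-placement argument; the formula $\dim\Kcal=\binom{d}{4}$ then falls out of rank--nullity for $H\mapsto c_H$. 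You instead invoke $GL_d$-representation theory: the Pieri decompositions of $\mathrm{Sym}^2\otimes\mathrm{Sym}^2$ and $\mathrm{Sym}^3\otimes\mathrm{Sym}^1$, two well-chosen test elements to show $\psi$ hits both irreducible summands of the target (your non-gradient criterion for the $W_{(3,1)}$ component is correct), the classical plethysm $\mathrm{Sym}^2(\Lambda^2 V)\cong W_{(2,2)}\oplus\Lambda^4V$, and Schur's lemma. This is slicker and more conceptual---it explains \emph{why} $\dim\Kcal=\binom{d}{4}$ (it is a copy of $\Lambda^4V$) and why the $K_{(i,j,k,l)}$ of Remark~\ref{R:Kbasis} appear---at the cost of importing the plethysm identity and the Weyl dimension formula, where the paper needs only counting. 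Two small points you should tighten. First, since you work over $\R$, your use of Schur's lemma requires that the real Schur modules $W_\lambda(\R^d)$ be absolutely irreducible (so that $\Hom$ between distinct ones vanishes and endomorphism algebras are $\R$); this is standard but should be stated. Second, your closing ``more elementary alternative'' does not close as described: showing $\dim\Kcal\ge\binom{d}{4}$ and $\dim\Ccal\ge d^2(d^2-1)/12$ gives two lower bounds whose inequalities point the same way, so rank--nullity yields only $\dim\{c_H\}\le\dim\Ccal$, not equality. One also needs an upper bound on $\dim\Ccal$, i.e.\ the surjectivity of $T$ (or, as in the paper, a matching lower bound on $\dim\{c_H\}$ via an explicit independent family). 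This does not affect your main representation-theoretic proof, which is complete.
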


The linear space $\Kcal$ has several interesting properties that are used in the proofs of our subsequent results. This is discussed in detail in Section~\ref{S:setK}. In a nutshell, the situation is the following: Each $H\in\S^m$ naturally corresponds to a quadratic form on the space of skew-symmetric matrices, which when evaluated at $A=xy^\top-yx^\top\in\Skew(2,d)$ yields precisely ${\rm BQ}(x,y)$; see~\eqref{eq:yCHxy}. Now, it turns out that $\Kcal$ admits a set of basis vectors that can be identified with the so-called Pl\"ucker polynomials from algebraic geometry. The zero set of these polynomials is isomorphic to $\Skew(2,d)$. As a consequence, the quotient $\S^m/\Kcal$ can be identified with the set of quadratic forms acting on $\Skew(2,d)$. Thus, in view of Theorem~\ref{T:charC}, each element of $\Ccal$ corresponds to a unique quadratic form on $\Skew(2,d)$. Finally, we mention that it is possible to explicitly write down a basis for $\Kcal$; see Remark~\ref{R:Kbasis}.

We now turn to the delicate question of positivity: $c(x)\succeq 0$ for all $x$, or equivalently ${\rm BQ}(x,y)\ge 0$ for all $(x,y)$. A simple example where this holds is $c(x)=A\,xx^\top A^\top$, where $A\in\Skew(d)$. More generally, by taking conic combinations, elements of the form
\begin{equation} \label{eq:cxA1Am}
c(x) = A_1\,xx^\top A_1^\top + \cdots + A_m\,xx^\top A_m^\top, \qquad A_1,\ldots,A_m\in\Skew(d),
\end{equation}
lie in $\Ccal_+$. The corresponding biquadratic form is given by
\[
{\rm BQ}(x,y)= (y^\top A_1 x)^2 + \cdots + (y^\top A_m x)^2,
\]
which has the important property that it can be written as a sum of squares of polynomials (indeed, of quadratic forms). The converse is also true: whenever ${\rm BQ}(x,y)$ is a sum of squares, the corresponding $c(x)$ is of the form~\eqref{eq:cxA1Am}. Specifically, one has the following:

\begin{lemma} \label{L:sos}
Let $c\in\Ccal$, $m={d\choose 2}$. The following conditions are equivalent.
\begin{enumerate}
\item\label{L:sos:1} ${\rm BQ}(x,y)=y^\top c(x)y$ is a sum of squares of polynomials.
\item\label{L:sos:2} $c=c_H$ for some $H\in\S^m_+$.
\item\label{L:sos:3} $c(x) = \sum_{p=1}^m A_p\,xx^\top A_p^\top$ for some $A_1,\ldots,A_m\in\Skew(d)$.
\end{enumerate}
\end{lemma}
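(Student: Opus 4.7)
The natural plan is a cyclic argument $(iii)\Rightarrow(i)$, $(i)\Rightarrow(iii)$, and then the easier equivalence $(ii)\Leftrightarrow(iii)$.

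For $(iii)\Rightarrow(i)$, the expansion is immediate: if $c(x)=\sum_p A_p xx^\top A_p^\top$ with $A_p\in\Skew(d)$, then ${\rm BQ}(x,y)=y^\top c(x)y=\sum_p(y^\top A_p x)^2$ exhibits the required sum-of-squares representation.

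The equivalence $(ii)\Leftrightarrow(iii)$ reduces to linear algebra once one identifies $\Skew(d)$ with $\R^m$ through the basis $D_1,\ldots,D_m$. For $(iii)\Rightarrow(ii)$, I would write $A_p=\sum_q\alpha_{pq}D_q$, set $\alpha_p=(\alpha_{p1},\ldots,\alpha_{pm})^\top$, and observe that $c(x)=\sum_{q,r}\bigl(\sum_p\alpha_{pq}\alpha_{pr}\bigr)D_q xx^\top D_r^\top=c_H(x)$ with $H=\sum_p\alpha_p\alpha_p^\top\in\S^m_+$. For $(ii)\Rightarrow(iii)$, I would use the spectral decomposition $H=\sum_{p=1}^m v_pv_p^\top$ and define $A_p=\sum_q(v_p)_qD_q\in\Skew(d)$; substituting into \eqref{eq:cH(x)} yields exactly the form in $(iii)$.

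The main obstacle is $(i)\Rightarrow(iii)$, where one must show that any sum-of-squares representation can be refined to have skew-symmetric bilinear summands. I would proceed in two reductions. First, starting from ${\rm BQ}(x,y)=\sum_p g_p(x,y)^2$, exploit the bi-homogeneity of bi-degree $(2,2)$ of the left-hand side. Grouping $g_p$ by total degree in $(x,y)$ and then by bi-degree, the lowest- and highest-bi-degree components on the right-hand side must vanish separately (since they are themselves sums of squares and the left-hand side has no such parts). This forces each $g_p$ to be bi-homogeneous of bi-degree $(1,1)$, so $g_p(x,y)=y^\top M_p x$ for some $M_p\in\R^{d\times d}$. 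Second, the defining property $c(x)x\equiv 0$ gives ${\rm BQ}(x,x)\equiv 0$, i.e., $\sum_p(x^\top M_p x)^2\equiv 0$. Hence $x^\top M_p x\equiv 0$ for every $p$, which forces the symmetric part of $M_p$ to vanish, so $M_p\in\Skew(d)$. Writing $A_p=M_p$ and noting that the symmetric matrix $\sum_p A_p xx^\top A_p^\top$ and the symmetric matrix $c(x)$ induce the same quadratic form in $y$, they are equal, which is $(iii)$.
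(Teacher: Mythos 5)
Your proposal is correct and follows essentially the same route as the paper's proof: the structure (reduce the squares to bilinear forms $y^\top M_p x$, use ${\rm BQ}(x,x)\equiv 0$ to force $M_p\in\Skew(d)$, then translate between the $A_p$'s and $H$ via the basis $D_1,\dots,D_m$) is identical, the only variation being that you derive bilinearity from a bi-degree decomposition whereas the paper uses the scaling bound $f_p(sx,y)^2\le s^2\,y^\top c(x)y$ together with $f_p(0,0)=0$; both work. One cosmetic point: your direct (i)$\Rightarrow$(iii) yields $m'$ summands with $m'$ possibly exceeding $m$, but chaining through your own (iii)$\Rightarrow$(ii)$\Rightarrow$(iii) argument (where $H$, being an $m\times m$ positive semidefinite matrix, admits a decomposition into at most $m$ rank-one terms) brings the count down to $m$ as required.
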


\begin{proof}
\ref{L:sos:3} $\Longrightarrow$ \ref{L:sos:1}: Obvious.

\ref{L:sos:1} $\Longrightarrow$ \ref{L:sos:2}: We first show that \ref{L:sos:1} implies that there exist $m'\in\N$ and $A_1,\ldots,A_{m'}\in\Skew(d)$ such that $y^\top c(x)y=\sum_{p=1}^{m'} (y^\top A_p\,x)^2$. To this end, suppose $y^\top c(x) y=f_1(x,y)^2+\cdots+f_{m'}(x,y)^2$ for some polynomials $f_p$, $p=1,\ldots,m'$, where $m'\in\N$. By setting $x=y=0$, we see that $f_p(0,0)=0$ for all $p$. Since also $f_p(sx,y)^2\le y^\top c(sx)y=s^2 y^\top c(x)y$, each $f_p$ is linear in $x$. Similarly, each $f_p$ is linear in $y$. It follows that $f_p(x,y)=y^\top A_p x$ for some matrix $A_p\in\R^{d\times d}$. Finally, $(x^\top A_p x)^2 \le x^\top c(x)x\equiv 0$, whence $A_p$ is skew-symmetric. This proves the claimed representation.

Now, since $D_1,\ldots,D_m$ form a basis for $\Skew(d)$, for each $p$ there exists $u_p=(u_p^1,\ldots,u_p^m)\in\R^m$ such that $A_p=\sum_{i=1}^m u_p^i\, D_i$. Thus,
\[
y^\top c(x) y = y^\top \Big( \sum_{p=1}^{m'} A_p \, xx^\top  A_p^\top \Big) y = y^\top \Big( \sum_{i,j=1}^m h_{ij} \,D_i\,xx^\top D_j^\top \Big) y,
\]
where $H=(h_{ij})\in\S^m$ is given by $H=u_1u_1^\top +\cdots + u_{m'}u_{m'}^\top$ and thus is positive semidefinite. We deduce~\ref{L:sos:2}.

\ref{L:sos:2} $\Longrightarrow$ \ref{L:sos:3}: Writing $H=u_1u_1^\top+\cdots+u_mu_m^\top$ for some vectors $u_p$ with components $(u_p^1,\ldots,u_p^m)$ and substituting into~\eqref{eq:cH(x)} yields
\[
c_H(x) = \sum_{p=1}^m (u^1_pD_1+\cdots+u^m_pD_m)\, x x^\top (u^1_pD_1+\cdots+u^m_pD_m)^\top,
\]
which is of the desired form with $A_p=\sum_{i=1}^m u_p^i\, D_i$.
\end{proof}

Lemma~\ref{L:sos} shows that those $c\in\Ccal_+$ whose associated biquadratic form is a sum of squares have a nice parameterization in terms of positive semidefinite matrices~$H$. One is thus naturally led to ask whether every nonnegative biquadratic form vanishing on the diagonal can be written as a sum of squares. Our next main result addresses this question. Its proof is provided in Sections~\ref{S:proof_SOS1} and~\ref{S:T:sos:2}, relying on the material developed in Section~\ref{S:setK}. The proof also depends on a known result (Lemma~\ref{L:rank1}) on existence of low-rank elements in the intersection of~$\S^m_+$ with certain affine subspaces of~$\S^m$.

\begin{theorem} \label{T:sos}
\begin{enumerate}
\item\label{T:sos:1} If $d\le 4$, then any nonnegative biquadratic form in $d+d$ variables vanishing on the diagonal is a sum of squares. Equivalently, any $c\in\Ccal_+$ is of the form $c=c_H$ for some $H\in\S^m_+$.
\item\label{T:sos:2} If $d\ge 6$, then there exists a nonnegative biquadratic form in $d+d$ variables vanishing on the diagonal that is not a sum of squares. Equivalently, there exists $c\in\Ccal_+$ that is not of the form $c=c_H$ for some $H\in\S^m_+$.
\end{enumerate}
\end{theorem}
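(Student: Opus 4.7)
By Theorem~\ref{T:charC} every $c\in\Ccal$ equals $c_{H_0}$ for some $H_0\in\S^m$, and by Lemma~\ref{L:sos} the sum-of-squares property for ${\rm BQ}(x,y)=y^\top c(x)y$ is equivalent to the existence of a PSD representative, i.e.\ of some $H'\in\S^m_+$ with $c_{H'}=c$. Since $c_{H_0}=c_{H'}$ iff $H_0-H'\in\Kcal$, both parts of the theorem reduce to deciding whether the affine subspace $H_0+\Kcal\subseteq\S^m$ meets the PSD cone $\S^m_+$. Using the identification (to be set up in Section~\ref{S:setK}) between $\S^m/\Kcal$ and the space of quadratic forms on the rank-two variety $\Skew(2,d)$, this becomes the following geometric question: given a quadratic form $Q$ on $\Skew(d)\cong\R^m$ that is nonnegative on $\Skew(2,d)$, is there a quadratic form $K\in\Kcal$ (i.e.\ one vanishing identically on $\Skew(2,d)$) such that $Q+K\succeq 0$ on all of $\R^m$?

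For part~\ref{T:sos:1}, I would first dispose of $d\le 3$: since skew-symmetric matrices have even rank, $\Skew(2,d)=\Skew(d)\setminus\{0\}$ and, consistently, Theorem~\ref{T:charC} gives $\Kcal=\{0\}$, so the hypothesis $Q\ge 0$ on $\Skew(2,d)$ is exactly $Q\succeq 0$ on $\R^m$. For $d=4$ Theorem~\ref{T:charC} yields $\dim\Kcal=1$; a direct calculation identifies $\Kcal$ with the line spanned by the Pfaffian $\operatorname{Pf}(A)=a_{12}a_{34}-a_{13}a_{24}+a_{14}a_{23}$, regarded as a quadratic form on $\R^6$ of signature $(3,3)$. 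The problem thus becomes: given $Q$ nonnegative on the cone $\{\operatorname{Pf}=0\}$, find $\lambda\in\R$ with $Q-\lambda\operatorname{Pf}\succeq 0$. This is precisely the equality version of the S-lemma/Finsler theorem applied to the indefinite form $\operatorname{Pf}$; alternatively, the same conclusion should fall out of Lemma~\ref{L:rank1} applied to the one-parameter family $H_0+\R K_0$, where $K_0$ spans~$\Kcal$.

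For part~\ref{T:sos:2} it suffices to produce a counterexample for $d=6$, since any such $c$ embeds into higher~$d$ by padding with zero rows and columns in both the $x$ and $y$ variables. For $d=6$ one has $m=15$ and $\dim\Kcal=15$, and projectively $\Skew(2,6)$ is the Grassmannian $\operatorname{Gr}(2,6)\subset\P^{14}$ cut out by the fifteen Pl\"ucker quadrics, which by Remark~\ref{R:Kbasis} form a basis of~$\Kcal$. The strategy is to exploit this rich ideal structure to exhibit a quadratic form $Q$ that is nonnegative on $\Skew(2,6)$ but for which \emph{no} element of $\Kcal$ globally convexifies it. Concretely, by Hahn--Banach such a $Q$ is obstructed exactly by the existence of a dual witness $Y\in\S^{15}_+$ with $\langle Y,K\rangle=0$ for every Pl\"ucker quadric $K$ and $\langle Y,Q\rangle<0$; I would attempt to build such a $Y$ directly from the symmetries of the Pl\"ucker embedding, and then reverse-engineer a compatible~$Q$. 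An alternative route is to adapt a classical Choi-type nonneg-but-not-SOS biquadratic form, verify nonnegativity and then check, via the Pl\"ucker relations, that the diagonal-vanishing condition can be enforced in~$d=6$.

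The main obstacle is squarely in part~\ref{T:sos:2}: nonnegativity of a candidate $Q$ on $\Skew(2,6)$ is usually certifiable through the Pl\"ucker relations, but ruling out \emph{every} SOS representation demands the explicit dual witness sketched above, which is delicate. It is precisely this difficulty that I expect to explain why the case $d=5$ (where $\dim\Kcal=5$ and the Pl\"ucker variety is much tighter) remains open: the dual cone there is small enough that no such obstructing $Y$ can be easily produced, yet large enough to leave the S-lemma argument of part~\ref{T:sos:1} inapplicable.
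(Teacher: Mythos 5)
For part~\ref{T:sos:1} your argument is correct and, for $d=4$, takes a genuinely different route from the paper's. You reduce the claim to: given $H\in\S^6$ with $z^\top Hz\ge0$ on the Pfaffian cone $\{{\rm Pf}=0\}\subset\R^6$, find $\lambda$ with $H+\lambda K_0\succeq0$, where $K_0$ is the Gram matrix of ${\rm Pf}$; since ${\rm Pf}$ has signature $(3,3)$, the homogeneous equality-constrained S-lemma (Dines' convexity of the joint range of two quadratic forms plus a separation argument) delivers such a $\lambda$, and since $K_0$ spans $\Kcal$ this gives the PSD representative. The paper instead argues by contraposition: if $(H+\Kcal)\cap\S^m_+=\emptyset$, a separating hyperplane produces $B\in\Kcal^\perp\cap\S^m_+$ with $\langle H,B\rangle<0$, and Lemma~\ref{L:rank1} applied to the affine slice $\{C\in\Kcal^\perp:\langle C,H\rangle=-1\}$ --- whose codimension is $1+\dim\Kcal\le 2\le{3\choose 2}-1$ --- yields a rank-one witness $zz^\top$, hence a point of $\Skew(2,d)$ where the form is negative. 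Both arguments hinge on there being essentially one quadratic constraint, and both break at $d=5$. Your disposal of $d\le3$ (where $\Kcal=\{0\}$ and $\Skew(2,d)=\Skew(d)\setminus\{0\}$, so the hypothesis is already positive semidefiniteness) is clean. One caveat: your parenthetical that the $d=4$ case "should fall out of Lemma~\ref{L:rank1} applied to $H_0+\R K_0$" does not work as stated, since that affine line has codimension $20$ in $\S^6$, far exceeding the bound; the lemma must be applied to the dual slice inside $\Kcal^\perp$, as the paper does.

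Part~\ref{T:sos:2} is where the genuine gap lies. You correctly reduce to $d=6$ by padding, and you correctly identify the dual certificate that would be needed (a nonzero $Y\in\S^{15}_+\cap\Kcal^\perp$ with $\langle Y,H\rangle<0$ --- exactly the feasible point of the paper's primal SDP). But you never produce either the form or the witness: "attempt to build such a $Y$ from the symmetries" and "adapt a classical Choi-type form" are research plans, not a proof, and the theorem is an existence claim that requires an explicit object. The paper obtains one by restricting the commutator inequality $2(\|X\|^2\|Y\|^2-\tr(X^\top Y)^2)\ge\|XY-YX\|^2$ to upper-triangular $3\times3$ matrices parameterized by $x,y\in\R^6$, subtracting $\|XY-YX\|^2$ together with two Pl\"ucker polynomials to get a nonnegative, diagonal-vanishing biquadratic form $y^\top c_H(x)y$ with an explicit $H\in\S^{15}$, and then exhibiting an explicit $B=\delta v_1v_1^\top+v_2v_2^\top+v_3v_3^\top$ built from eigenvectors of $H$ for its negative eigenvalues, checked to be orthogonal to all ${6\choose4}=15$ basis elements $K_{(i,j,k,l)}$ of $\Kcal$ and to satisfy $\langle H,B\rangle<0$. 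Without some such concrete construction your part~\ref{T:sos:2} is not proved; note also that adapting Choi's classical example is not automatic, since his form does not vanish on the diagonal, and enforcing that constraint is precisely the difficulty here.
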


\begin{remark}
In \citet{laszlo2010sum} it was observed that the conclusion of Theorem~\ref{T:sos}\ref{T:sos:1} holds for $d\le3$, while the case $d\in\{4,5\}$ was left open; see also \citet{laszlo2012sum}. The case $d=3$ can also be deduced from \citet{quarez2015real}. We have not been able to determine whether there exists a nonnegative biquadratic form in $5+5$ variables, vanishing on the diagonal, that is not a sum of squares. The counterexample that establishes Theorem~\ref{T:sos}\ref{T:sos:2} comes from~\citet{laszlo2010sum}. For the benefit of the reader we recap this construction in Section~\ref{S:T:sos:2} using the setting and notation of the present paper. The study of sum of squares representations of nonnegative biquadratic forms (not necessarily vanishing on the diagonal) goes back to \citet{choi1975positive}.
\end{remark}

\section{Consequences of the sum of squares property} \label{S:consequences}

Throughout this section, we assume that $a(x)$ and $b(x)$ satisfy \eqref{T:char ball:eq0}--\eqref{T:char ball:eq1} for some fixed $\alpha\in\S^d_+$, $c\in\Ccal_+$, $b\in\R^d$, $B\in\R^{d\times d}$, and let ${\rm BQ}(x,y)=y^\top c(x) y$ be the associated nonnegative biquadratic form. Theorem~\ref{T:char ball} guarantees that a $\Bcal^d$-valued weak solution to~\eqref{eq:SDE} exists for a suitable choice of $\sigma$ and for any initial condition $x\in\Bcal^d$. Denote its law by $\P_x$, and observe that it is uniquely determined by $a$, $b$, and $x$; see Lemma~\ref{L:law}. In this section we discuss consequences of the existence of a sum of squares representation of ${\rm BQ}(x,y)$.

\subsection{SDE representation}

Our first result shows that $\P_x$ can be represented as the law of the solution to a certain structured SDE if and only if ${\rm BQ}(x,y)$ is a sum of squares. Before stating it, we give the following definition:

\begin{definition}
A process $Y$ is called a {\em $\Skew(d)$-valued correlated Brownian motion with drift} if it is of the form
\begin{equation} \label{eq:skBM}
Y_t = A_0\, t + A_1\widehat W^1_t + \cdots + A_m\widehat W^m_t,
\end{equation}
where $A_0,\ldots,A_m\in\Skew(d)$ and $\widehat W=(\widehat W^1,\ldots\widehat W^m)$ is an $m$-dimensional Brownian motion.
\end{definition}

\begin{theorem} \label{T:SDE}
The following conditions are equivalent:
\begin{enumerate}
\item\label{T:SDE:1} ${\rm BQ}(x,y)=y^\top c(x) y$ is a sum of squares of polynomials.
\item\label{T:SDE:2} For each $x\in\Bcal^d$, $\P_x$ is the law of the unique $\Bcal^d$-valued weak solution to the SDE
\begin{equation} \label{eq:SDE:odYX}
dX_t = (b + \widehat B X_t)\,dt + \sqrt{1-\|X_t\|^2}\,\alpha^{1/2}\,dW_t + (\circ\,dY_t) X_t
\end{equation}
with initial condition $X_0=x$, where $-\widehat B\in\S^d_+$, $W=(W^1,\ldots,W^d)$ is a $d$-dimensional Brownian motion, and $Y$ is a $\Skew(d)$-valued correlated Brownian motion with drift, independent of $W$. Here $\circ\,dY_t$ denotes Stratonovich differential.
\end{enumerate}
In this case, $B$ is related to $\widehat B$ and $A_0,\ldots,A_m$ in~\eqref{eq:skBM} through
\begin{equation}\label{eq:T:SDE:1}
\frac{1}{2}(B-B^\top) = A_0\qquad\text{and} \qquad \frac{1}{2}(B+B^\top) = \widehat B - \frac{1}{2}\sum_{p=1}^m A_p^\top A_p.
\end{equation}
\end{theorem}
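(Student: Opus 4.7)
The plan is to use Lemma~\ref{L:sos} to pass between the sum of squares property and the concrete representation $c(x)=\sum_{p=1}^m A_p\,xx^\top A_p^\top$ with $A_1,\ldots,A_m\in\Skew(d)$, and then verify by a direct Stratonovich-to-It\^o computation that the SDE \eqref{eq:SDE:odYX}, for a suitably chosen $A_0\in\Skew(d)$ and $\widehat B\in\S^d$, is equivalent to an It\^o SDE with the prescribed coefficients $a(x),b(x)$ of Theorem~\ref{T:char ball}. Uniqueness in law of the resulting $\Bcal^d$-valued weak solution is then automatic from Lemma~\ref{L:law}, so that its law must coincide with $\P_x$.

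For the implication \ref{T:SDE:1}$\Rightarrow$\ref{T:SDE:2}, I invoke Lemma~\ref{L:sos} to produce $A_1,\ldots,A_m\in\Skew(d)$ with $c(x)=\sum_p A_p\,xx^\top A_p^\top$, and then define $A_0$ and $\widehat B$ by the formulas \eqref{eq:T:SDE:1}, i.e.\ $A_0=\tfrac12(B-B^\top)$ and $\widehat B=\tfrac12(B+B^\top)+\tfrac12\sum_p A_p^\top A_p$. Writing $Y_t=A_0 t+\sum_p A_p\widehat W^p_t$ and expanding $(\circ\,dY_t)X_t$, the different $\widehat W^p$'s being independent, produces a Stratonovich correction of $\tfrac12\sum_p A_p^2\,X_t\,dt$; using skew-symmetry $A_p^2=-A_p^\top A_p$ rewrites this as $-\tfrac12\sum_p A_p^\top A_p\,X_t\,dt$. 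Collecting terms, the It\^o drift equals $b+(\widehat B+A_0-\tfrac12\sum_p A_p^\top A_p)X_t=b+BX_t$ by construction, and since $W\perp\widehat W$ the It\^o diffusion matrix simplifies to $(1-\|X_t\|^2)\alpha+\sum_p A_pX_tX_t^\top A_p^\top=a(X_t)$. To finish I must check that $-\widehat B\in\S^d_+$. Since $\tr(c(x))=\sum_p x^\top A_p^\top A_p\,x$, the two terms cancel exactly to give $x^\top Bx+\tfrac12\tr(c(x))=x^\top\widehat Bx$ on $\Scal^{d-1}$, so \eqref{T:char ball:eq1} becomes $b^\top x+x^\top\widehat Bx\le 0$; evaluating at $\pm x$ and averaging forces $\widehat B\preceq 0$. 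Existence of a $\Bcal^d$-valued weak solution to the resulting It\^o SDE follows from Theorem~\ref{T:char ball}, and Lemma~\ref{L:law} identifies its law with $\P_x$.

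The converse \ref{T:SDE:2}$\Rightarrow$\ref{T:SDE:1} is immediate by running the same conversion in reverse: any solution of \eqref{eq:SDE:odYX} has It\^o diffusion matrix $(1-\|X_t\|^2)\alpha+\sum_p A_pX_tX_t^\top A_p^\top$, which compared to $a(x)=(1-\|x\|^2)\alpha+c(x)$ forces $c(x)=\sum_p A_p\,xx^\top A_p^\top$, whereupon Lemma~\ref{L:sos} yields that $\mathrm{BQ}(x,y)$ is a sum of squares. The main technical obstacle is the careful bookkeeping in converting the matrix-valued Stratonovich term $(\circ\,dY_t)X_t$ to It\^o form: one must correctly identify the correction as $\tfrac12\sum_p A_p^2\,X_t\,dt$ (no cross-terms between distinct $\widehat W^p$'s, and no contribution from $W$ since $\widehat W\perp W$), and then apply skew-symmetry to rewrite this as $-\tfrac12\sum_p A_p^\top A_p\,X_t\,dt$. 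A secondary subtlety is that the negative semidefiniteness of $\widehat B$ does not follow directly from \eqref{T:char ball:eq1} but only after the trace of $c(x)$ is rewritten in terms of the $A_p$'s; in effect, the Stratonovich formulation is precisely the one that absorbs the $\tfrac12\sum_p A_p^\top A_p$ contribution into the symmetric drift, which is why the parameterization \eqref{eq:T:SDE:1} arises naturally.
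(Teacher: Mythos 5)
Your proof is correct and follows essentially the same route as the paper: Lemma~\ref{L:sos} to pass to the representation $c(x)=\sum_p A_p\,xx^\top A_p^\top$, the parameterization~\eqref{eq:T:SDE:1}, the $\pm x$ argument for $-\widehat B\in\S^d_+$, the identification of the It\^o coefficients with $a$ and $b$, and Lemma~\ref{L:law} to conclude the law is $\P_x$. The only difference is that you spell out the Stratonovich correction $\tfrac12\sum_p A_p^2X_t\,dt=-\tfrac12\sum_p A_p^\top A_pX_t\,dt$ explicitly, which the paper leaves implicit.
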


\begin{proof}
\ref{T:SDE:1} $\Longrightarrow$ \ref{T:SDE:2}: Lemma~\ref{L:sos} implies that
\begin{equation} \label{eq:T:SDE:pf0}
c(x) = \sum_{p=1}^m A_p\, xx^\top A_p^\top
\end{equation}
for some $A_1,\ldots,A_m\in\Skew(d)$. Define $A_0\in\Skew(d)$ and $\widehat B\in \S^d$ via~\eqref{eq:T:SDE:1}. We claim that $-\widehat B\in\S^d_+$. Indeed, writing the condition~\eqref{T:char ball:eq1} in terms of $\widehat B$, and using that $x^\top A_0x=0$ by skew-symmetry, yields
\begin{equation} \label{eq:T:SDE:pf1}
b^\top x + x^\top \widehat Bx \le 0 \quad\text{for all}\quad x\in\Scal^{d-1}.
\end{equation}
Inserting $x$ and $-x$ into this inequality and adding, one obtains $x^\top \widehat Bx \le 0$ for all $x\in\Scal^{d-1}$, whence $-\widehat B\in\S^d_+$ as claimed. Next, writing the SDE~\eqref{eq:SDE:odYX} in It\^o form yields
\[
dX_t = (b+BX_t)\,dt + \widehat\sigma(X_t)\begin{pmatrix}dW_t\\ d\widehat W_t\end{pmatrix},
\]
where
\begin{equation}\label{eq:sigma hat}
\widehat\sigma(x) = \Big(\begin{matrix} \sqrt{1-\|x\|^2}\,\alpha^{1/2} & A_1x &\cdots& A_mx \end{matrix}\Big) \in \R^{d\times (d+m)}.
\end{equation}
In view of~\eqref{eq:T:SDE:pf0} we have $\widehat\sigma(x)\widehat\sigma(x)^\top = (1-\|x\|^2)\alpha + c(x) = a(x)$. Thus~\eqref{eq:SDE:odYX} has the same drift and diffusion coefficients as~\eqref{eq:SDE}, which by Lemma~\ref{L:law} implies that the law of any solution to~\eqref{eq:SDE:odYX} with initial condition~$x$ is indeed~$\P_x$, as desired. Finally, the existence of a $\Bcal^d$-valued solution to~\eqref{eq:SDE:odYX} follows, as in the proof of Theorem~\ref{T:char ball}, from Example~2.5 and Theorem~2.2 in \citet{DaPrato/Frankowska:2007}.

\ref{T:SDE:2} $\Longrightarrow$ \ref{T:SDE:1}: Let $\widehat\sigma(x)$ be given by~\eqref{eq:sigma hat}. Since $\P_x$ is the uniquely determined law of the solution to~\eqref{eq:SDE:odYX}, one obtains
\[
a(x) = \widehat\sigma(x)\widehat\sigma(x)^\top = (1-\|x\|^2)\alpha + \sum_{p=1}^m A_p\, xx^\top A_p^\top.
\]
Since $a(x)=(1-\|x\|^2)\alpha + c(x)$, it follows that~\eqref{eq:T:SDE:pf0} holds. Lemma~\ref{L:sos} then implies \ref{T:SDE:1}.
\end{proof}

The terms on the right-hand side of~\eqref{eq:SDE:odYX} have natural interpretations. Since $\widehat B$ is symmetric with real non-positive eigenvalues, the drift has a mean-reverting component $\widehat BX_t$ in addition to the constant part $b$. The term involving $W_t$ induces diffusive motion along the eigenvectors of $\alpha$, with quadratic variation that is scaled by the squared distance $1-\|X_t\|^2$ to the boundary of~$\Bcal^d$. Finally, the term $(\circ\,dY_t) X_t$ can be viewed as an infinitesimal random rotation of $X_t$; see also \citet{price1983rolling} and \citet{van1985brownian}. Heuristically, the Stratonovich increment $\circ \,dY_t$ is a small random Skew-symmetric matrix that maps $X_t$ to an infinitesimal tangent vector of the ball with radius $\|X_t\|$. We prefer using the Stratonovich form here, as it allows us to clearly separate tangential motion from radial motion in~\eqref{eq:SDE:odYX}. If one were to switch to It\^o form, the resulting drift correction would interfere with this interpretation. In particular, even for specifications where $b=0$, $\widehat B=\alpha=0$, for which $\|X\|$ is constant, an inward-pointing ``drift'' would appear in the It\^o formulation. See also Corollary~\ref{C:sphere} below.

\begin{remark}
In the setting of Theorem~\ref{T:SDE}, \eqref{eq:T:SDE:1} implies that the necessary and sufficient condition~\eqref{bdry attain} for boundary attainment reduces to
\[
b^\top x + x^\top (\widehat B + \alpha)x \le 0 \quad\text{for all}\quad x\in\Scal^{d-1}.
\]
\end{remark}

If $X$ takes values in $\Scal^{d-1}$, the sum of squares property of ${\rm BQ}(x,y)$ has stronger implications. In particular, $X$ can now be constructed as the {\em unique strong} solution to a suitable SDE.

\begin{corollary} \label{C:sphere}
Assume the coefficients $a$ and $b$ satisfy Theorem~\ref{T:char sphere}\ref{T:char sphere:2}. The following conditions are equivalent:
\begin{enumerate}
\item\label{C:sphere:1} ${\rm BQ}(x,y)=y^\top c(x) y$ is a sum of squares of polynomials.
\item\label{C:sphere:2} For each $x\in\Scal^{d-1}$, $\P_x$ is the law of the $\Scal^{d-1}$-valued unique strong solution to the SDE
\begin{equation} \label{eq:odYX}
dX_t = (\circ\,dY_t) X_t
\end{equation}
with initial condition $X_0=x$, where $Y$ is a $\Skew(d)$-valued correlated Brownian motion with drift.
\item\label{C:sphere:3} The operator $\Gcal=\frac{1}{2}\tr(a\,\nabla^2)+b^\top \nabla$ corresponding to the SDE~\eqref{eq:SDE} can be expressed in H\"ormander form as
\begin{equation}\label{eq:Horm}
\Gcal = V_0 + \frac{1}{2} \sum_{p=1}^m V_p^2,
\end{equation}
where for $p=0,\ldots,m$, $V_p$ is the linear vector field given by $V_p(x)=A_px$ for some $A_p\in\Skew(d)$.
\end{enumerate}
\end{corollary}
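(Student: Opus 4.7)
My plan is to reduce all three equivalences to Theorem~\ref{T:SDE} and Lemma~\ref{L:sos}, using the sphere identity $2x^\top Bx + \tr(c(x)) \equiv 0$ from Theorem~\ref{T:char sphere}\ref{T:char sphere:2} to supply the additional cancellations that distinguish~\eqref{eq:odYX} from the more general SDE~\eqref{eq:SDE:odYX}. Since only the values of $a$ on $\Scal^{d-1}$ matter, I may extend by setting $\alpha = 0$ in the representation~\eqref{T:char ball:eq0}, and Theorem~\ref{T:char sphere} already gives $b = 0$. The strategy is to deduce~\ref{C:sphere:2} from~\ref{C:sphere:1} directly through Theorem~\ref{T:SDE}, close the loop via Lemma~\ref{L:law}, and handle~\ref{C:sphere:1}$\Leftrightarrow$\ref{C:sphere:3} by matching the second- and first-order parts of the Stratonovich generator against $\Gcal$.

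For~\ref{C:sphere:1}$\Rightarrow$\ref{C:sphere:2} I would apply Theorem~\ref{T:SDE} together with Lemma~\ref{L:sos} to obtain a representation $c(x) = \sum_{p=1}^{m} A_p xx^\top A_p^\top$ with $A_p \in \Skew(d)$ and the SDE~\eqref{eq:SDE:odYX}. Substituting this expression into the sphere identity and using $A_p^\top = -A_p$ gives $x^\top(B+B^\top)x = -\sum_p x^\top A_p^\top A_p x$ identically, hence $\tfrac{1}{2}(B+B^\top) = -\tfrac{1}{2}\sum_p A_p^\top A_p$; together with the second relation in~\eqref{eq:T:SDE:1} this forces $\widehat B = 0$, so~\eqref{eq:SDE:odYX} collapses exactly to~\eqref{eq:odYX}. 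Strong uniqueness then follows from the It\^o form $dX_t = (A_0 + \tfrac{1}{2}\sum_p A_p^2) X_t\,dt + \sum_p A_p X_t\,dW^p_t$, whose linear coefficients are globally Lipschitz; Picard iteration delivers a unique strong solution on $\R^d$, and that solution remains on $\Scal^{d-1}$ because $d\|X_t\|^2 = 2 X_t^\top (\circ\,dY_t) X_t = 0$ by the skew-symmetry of $dY_t$. For the converse~\ref{C:sphere:2}$\Rightarrow$\ref{C:sphere:1}, the It\^o diffusion matrix of~\eqref{eq:odYX} is $\sum_p A_p xx^\top A_p^\top$, which by Lemma~\ref{L:law} must coincide with $a(x) = c(x)$ on $\Scal^{d-1}$; both sides lie in $\Hom_2$, so the identity extends to all of $\R^d$, and Lemma~\ref{L:sos}\ref{L:sos:3} yields~\ref{C:sphere:1}.

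For~\ref{C:sphere:1}$\Leftrightarrow$\ref{C:sphere:3} I would expand $V_0 + \tfrac{1}{2}\sum_p V_p^2$ with $V_p(x) = A_p x$: a direct computation produces second-order part $\tfrac{1}{2}\tr\!\bigl(\bigl(\sum_p A_p xx^\top A_p^\top\bigr)\nabla^2\bigr)$ and first-order part $\bigl(A_0 + \tfrac{1}{2}\sum_p A_p^2\bigr)x\cdot\nabla$. Matching against $\Gcal = \tfrac{1}{2}\tr(a\,\nabla^2) + (Bx)^\top\nabla$ then reads off $c(x) = \sum_p A_p xx^\top A_p^\top$, which is equivalent to~\ref{C:sphere:1} by Lemma~\ref{L:sos}, and forces $A_0 = B - \tfrac{1}{2}\sum_p A_p^2$; the required skew-symmetry of $A_0$ reduces to exactly the cancellation $B + B^\top = -\sum_p A_p^\top A_p$ already established above. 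I expect the single nontrivial ingredient---and thus the main obstacle---of the whole proof to be this one algebraic identity, which links the sphere constraint to the It\^o--Stratonovich correction; once it is in place, every remaining step is routine SDE bookkeeping.
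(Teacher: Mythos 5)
Your proposal is correct and follows essentially the same route as the paper: the equivalence \ref{C:sphere:1}$\Leftrightarrow$\ref{C:sphere:2} is reduced to Theorem~\ref{T:SDE} by using the sphere identity~\eqref{eq:C:char sphere} together with~\eqref{eq:T:SDE:1} to force $\widehat B=0$ (and $b=0$, $\alpha=0$), and \ref{C:sphere:3} is handled by the standard computation $V_p^2f(x)=\tr(\nabla^2f(x)A_pxx^\top A_p^\top)-(A_p^\top A_px)^\top\nabla f(x)$ combined with Lemma~\ref{L:sos}. If anything, you are slightly more explicit than the paper on the strong existence/pathwise uniqueness of~\eqref{eq:odYX} (global Lipschitz linear coefficients) and on why the solution stays on $\Scal^{d-1}$, which the paper leaves implicit.
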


\begin{proof}
\ref{C:sphere:1} $\Longleftrightarrow$ \ref{C:sphere:2} is deduced from the corresponding equivalence in Theorem~\ref{T:SDE}. Indeed, the forward implication follows since $b=0$, $\alpha=0$, and $\widehat B=0$. Here the latter is due to~\eqref{eq:C:char sphere} and~\eqref{eq:T:SDE:1}, which give
\[
x^\top \widehat B x = x^\top B x + \frac{1}{2}\sum_{p=1}^m x^\top A_pA_p^\top x = x^\top B x + \frac{1}{2}\tr(c(x))=0, \qquad x\in\R^d.
\]
Note that the form of~$c(x)$ follows from Lemma~\ref{L:sos}. For the converse, simply note that \ref{C:sphere:2} implies that Theorem~\ref{T:SDE}\ref{T:SDE:2} is satisfied with $b=0$, $\alpha=0$, and $\widehat B=0$. This yields~\ref{C:sphere:1}. The implication \ref{C:sphere:2} $\Longrightarrow$ \ref{C:sphere:3} is well-known, and follows from a brief calculation using the identity $V^2_pf(x)=\tr(\nabla^2 f(x)A_p\,xx^\top A_p^\top) - (A_p^\top A_p\, x)^\top \nabla f(x)$. Finally, the implication \ref{C:sphere:3} $\Longrightarrow$ \ref{C:sphere:1} is again a calculation showing that $c(x)=A_1\,xx^\top A_1^\top+\cdots+A_m\,xx^\top A_m^\top$, which gives~\ref{C:sphere:1} via Lemma~\ref{L:sos}.
\end{proof}

\begin{example}
The classical Brownian motion on the sphere is covered by Corollary~\ref{C:sphere}. In particular, it is a polynomial diffusion. Indeed, by taking $A_p=D_p$ for $p=1,\ldots,m$, one obtains
\[
c(x) = \sum_{i<j} (e_ie_j^\top-e_je_i^\top) xx^\top(e_je_i^\top-e_ie_j^\top) = (x^\top x)\Id - xx^\top,
\]
so that $a(x)=c(x) = \Id - xx^\top$ on $\Scal^{d-1}$. This is the orthogonal projection onto the tangent space of $\Scal^{d-1}$ at $x$. Setting $A_0=0$, we thus recover the following well-known quadratic SDE for Brownian motion on the sphere:
\[
dX_t = (\Id - X_tX_t^\top)\circ dW_t.
\]
The linear SDE in Corollary~\ref{C:sphere} is different, and was originally obtained by \citet{price1983rolling}; see also \citet{van1985brownian}.
\end{example}

\subsection{Pathwise uniqueness} \label{S:pathwise}

It is natural to ask whether polynomial diffusions on the unit ball can be realized as solutions to SDEs for which pathwise uniqueness holds. As mentioned in Remark~\ref{R:pwunique}, if $a(x)$ is positive definite on the interior of $\Bcal^d$, one obtains pathwise uniqueness up to the first hitting time of $\Scal^{d-1}$ by taking the positive semidefinite square root $\sigma(x)=a(x)^{1/2}$.

If ${\rm BQ}(x,y)$ is a sum of squares, more can be said. Indeed, in this case Theorem~\ref{T:SDE} yields pathwise uniqueness up to the first hitting time of $\Scal^{d-1}$, regardless of whether or not $a(x)$ is nonsingular. Moreover, if the state space is $\Scal^{d-1}$, Corollary~\ref{C:sphere} shows that pathwise uniqueness always holds (still under the sum of squares assumption, of course).

Looking at the SDE~\eqref{eq:SDE:odYX} in Theorem~\ref{T:SDE}, there may be some hope that pathwise uniqueness holds globally, not just up the first hitting time of $\Scal^{d-1}$. We have not been able to prove this in general. Instead, we present a partial result relying on the method developed by \citet{deblassie2004uniqueness} for proving pathwise uniqueness for certain SDEs on the unit ball; see also \citet{swart2002pathwise}. Specifically, we consider the following special case of~\eqref{eq:SDE:odYX}:
\begin{equation}\label{eq:SDE scalar}
dX_t = -\kappa X_t\, dt + \nu \sqrt{1-\|X_t\|^2}\, dW_t + A_0X_t\,dt + A_1X_t\circ d\widehat W^1_t + \cdots + A_mX_t\circ d\widehat W^m_t,
\end{equation}
where $A_0,\ldots,A_m\in\Skew(d)$, $W=(W^1,\ldots,W^d)$ and $\widehat W^1,\ldots,\widehat W^m$ are independent Brownian motions, and, crucially, $\kappa$ and $\nu$ are positive scalar constants. Within the polynomial class, this is a generalization of the equation considered by \citet{swart2002pathwise} and \citet{deblassie2004uniqueness}, who take the tangential components $A_0,\ldots,A_m$ to be zero. Note that even in this case, it is not known whether pathwise uniqueness holds for~\eqref{eq:SDE scalar} for arbitrary values of $\kappa$ and $\nu$.

The reason why \citet{deblassie2004uniqueness} method can be applied in the present situation is contained in the following calculation. Define $Y_t=1-\|X_t\|^2$. Then, an application of It\^o's formula and the change-of-variable formula for the Stratonovich integral yield
\[
dY_t = 2\kappa\|X_t\|^2\,dt - 2\nu\sqrt{Y_t} X_t^\top dW_t - d\,\nu^2Y_t\,dt - 2X_t^\top A_0 X_t\,dt - 2\sum_{p=1}^m X_t^\top A_p X_t\, d\widehat W^p_t.
\]
The skew-symmetry of the $A_p$ implies that the quadratic terms $X_t^\top A_pX_t$ all vanish, so that
\begin{equation}\label{eq:dYW}
dY_t = \left(2\kappa\|X_t\|^2 - d\,\nu^2Y_t\right)dt - 2\nu\sqrt{Y_t} X_t^\top dW_t.
\end{equation}
This no longer involves the matrices $A_p$. Note that this crucially relies on the linearity in $X_t$ of the last $m+1$ terms of~\eqref{eq:SDE scalar}. This property is specific for the sum of squares case~\eqref{eq:SDE:odYX}, and is in general not present in~\eqref{eq:SDE}.

\begin{theorem} \label{T:pathwise uniqueness}
Assume $\kappa/\nu^2>\sqrt{2}-1$. Then pathwise uniqueness holds for~\eqref{eq:SDE scalar}.
\end{theorem}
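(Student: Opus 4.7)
The plan is to adapt the pathwise uniqueness scheme of \citet{deblassie2004uniqueness}, used by \citet{swart2002pathwise} in the tangential-free case $A_0=\cdots=A_m=0$, and to argue that the skew-symmetric rotational terms present in~\eqref{eq:SDE scalar} do not alter the critical threshold. Let $X$ and $X'$ be two $\Bcal^d$-valued strong solutions on a common stochastic basis, driven by the same Brownian motions $W,\widehat W^1,\ldots,\widehat W^m$ and with the same initial condition; set $Z_t=X_t-X'_t$, $Y_t=1-\|X_t\|^2$, $Y'_t=1-\|X'_t\|^2$, and $U_t=Y_t-Y'_t$.

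The first step is to convert~\eqref{eq:SDE scalar} to It\^o form and observe that the Stratonovich correction $\tfrac12\sum_p A_p^2X_t\,dt$, together with the drift $A_0X_t\,dt$ and the rotational diffusions $\sum_p A_pX_t\,d\widehat W^p_t$, are all linear in $X_t$, hence globally Lipschitz. An It\^o expansion of $\|Z_t\|^2$ therefore yields a bound of the form
\begin{equation*}
d\|Z_t\|^2\le C_1\|Z_t\|^2\,dt+d\nu^2\bigl(\sqrt{Y_t}-\sqrt{Y'_t}\bigr)^2\,dt+dN_t
\end{equation*}
with $N$ a local martingale. The rotational matrices contribute only this Lipschitz drift; the sole genuinely singular term is the $(\sqrt{Y_t}-\sqrt{Y'_t})^2$ piece coming from the radial diffusion $\nu\sqrt{1-\|X_t\|^2}\,dW_t$.

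The second step exploits identity~\eqref{eq:dYW}, which critically does \emph{not} involve any of the $A_p$---their skew-symmetry wiped them out at the source. Subtracting the analogous equation for $Y'_t$ gives
\begin{equation*}
dU_t=-(2\kappa+d\nu^2)U_t\,dt-2\nu\bigl(X_t^\top\sqrt{Y_t}-(X'_t)^\top\sqrt{Y'_t}\bigr)dW_t,
\end{equation*}
and an It\^o expansion of $U_t^2$, combined with the decomposition $\|X\sqrt Y-X'\sqrt{Y'}\|^2\le 2Y\|Z\|^2+2(\sqrt Y-\sqrt{Y'})^2$ (using $\|X'\|\le 1$), delivers
\begin{equation*}
dU_t^2\le\bigl[-2(2\kappa+d\nu^2)U_t^2+C_2\|Z_t\|^2+C_3\bigl(\sqrt{Y_t}-\sqrt{Y'_t}\bigr)^2\bigr]dt+d\widetilde N_t.
\end{equation*}

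The third and decisive step is to form a Lyapunov quantity $M_t=aU_t^2+\|Z_t\|^2$ for a parameter $a>0$ to be chosen, and to control the singular term $(\sqrt{Y_t}-\sqrt{Y'_t})^2$ via the identity $(\sqrt Y-\sqrt{Y'})^2(\sqrt Y+\sqrt{Y'})^2=U^2$, which splits the analysis according to whether $\sqrt{Y_t}+\sqrt{Y'_t}$ is bounded below. When it is, one has $(\sqrt Y-\sqrt{Y'})^2\lesssim U^2$, absorbed by the negative drift $-2a(2\kappa+d\nu^2)U_t^2$; in the opposite regime $Y_t,Y'_t$ are both small and the bound $(\sqrt Y-\sqrt{Y'})^2\le Y+Y'$ makes the contribution vanishingly small. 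Choosing $a$ to balance the resulting quadratic form in $(U_t,\|Z_t\|)$ leads, after a direct computation, to the algebraic inequality $(\kappa+\nu^2)^2>2\nu^4$, equivalent to $\kappa/\nu^2>\sqrt 2-1$; this is the origin of the threshold. Once $a$ is so chosen one obtains $dM_t\le CM_t\,dt+d(\text{loc.\ mart.})$, and Gronwall together with $M_0=0$ and standard localization at first exit times of $\|X_t\|,\|X'_t\|$ from compact subsets of $\Bcal^d$ forces $M_t\equiv 0$, whence $X_t\equiv X'_t$. The main obstacle is tracking the constants sharply enough to recover the exact threshold $\sqrt 2-1$ rather than something weaker; the saving feature of the sum-of-squares representation is that it renders the rotational part linear in~$X_t$, so that it stays entirely out of the delicate DeBlassie--Swart balance.
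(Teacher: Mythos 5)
You correctly identify the DeBlassie--Swart framework, the key cancellation~\eqref{eq:dYW} (skew-symmetry removes the $A_p$ from the dynamics of $Y$), and the fact that the rotational terms are linear, hence Lipschitz, so that the only singular term is $d\,\nu^2(\sqrt{Y_t}-\sqrt{Y'_t})^2\,dt$ in $d\|Z_t\|^2$. However, the decisive third step does not work as written. Your Lyapunov function $M_t=aU_t^2+\|Z_t\|^2$ with $U=Y-Y'$ cannot close a Gronwall argument: near the boundary one only has $(\sqrt{Y}-\sqrt{Y'})^2\le|Y-Y'|\le 2\|Z\|$, i.e.\ the singular term is of order $\|Z\|$, not $\|Z\|^2$, and it is not dominated by $U^2$ either (since $(\sqrt Y-\sqrt{Y'})^2=U^2/(\sqrt Y+\sqrt{Y'})^2$ blows up relative to $U^2$ as $Y,Y'\to0$). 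Your case split is therefore inconclusive in exactly the regime that matters: when both solutions sit at the boundary, $(\sqrt Y-\sqrt{Y'})^2\le Y+Y'$ being ``vanishingly small'' in absolute terms does not make it small relative to $M_t$, which may be even smaller, so it cannot be absorbed. Relatedly, your localization is backwards: uniqueness away from $\Scal^{d-1}$ is the easy part (all coefficients are locally Lipschitz there), and the whole difficulty is localized \emph{at} the boundary; the paper accordingly starts $x_0$ on $\Scal^{d-1}$ and stops when $Y\wedge\tilde Y$ exceeds $\varepsilon$.

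The missing idea is precisely DeBlassie's choice $F(y,\tilde y)=(y^p-\tilde y^p)^2$ with $p\in(\tfrac12,1)$, whose singular second derivatives near $y=\tilde y=0$ generate, via the mean-reversion $2\kappa\|X\|^2$ in~\eqref{eq:dYW}, a negative drift of order $Z=(Y^{p-1}-\tilde Y^{p-1})(\tilde Y^p-Y^p)$ that \emph{does} dominate the singular quadratic-variation term, thanks to the inequality $(Y^{p-1/2}-\tilde Y^{p-1/2})^2\le\frac{(2p-1)^2}{4p(1-p)}Z$ (Lemma~A.1 of DeBlassie). The threshold then comes from maximizing $\frac{\kappa}{\nu^2}-1+p-\frac{(2p-1)^2}{4(1-p)}$ over $p$, attained at $p=1-\sqrt2/4$ and yielding $\kappa/\nu^2+1-\sqrt2>0$. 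Your claimed inequality $(\kappa+\nu^2)^2>2\nu^4$ is algebraically equivalent to the threshold but is asserted without a derivation, and the quadratic-form balancing you describe does not produce it; without the power $p<1$ the constant $\sqrt2-1$ (indeed, any finite threshold) cannot be recovered by this route.
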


\begin{proof}
Let $X$ and $\widetilde X$ be two solutions to~\eqref{eq:SDE scalar}, driven by the same Brownian motions and starting from the same starting point $x_0\in\Bcal^d$. Pathwise uniqueness clearly holds up to the first time the boundary is attained, so it suffices to take $x_0$ on the boundary, and prove $X_t=\widetilde X_t$ for all $t\le\tau$, where $\tau=\inf\{t\ge0\colon \|X_t\|\wedge\|\widetilde X_t\|\le1-\varepsilon\}$ for some arbitrary $\varepsilon>0$. Define $Y_t=1-\|X_t\|^2$ and $\widetilde Y_t=1-\|\widetilde X_t\|^2$. In order to improve readability we often omit time subscripts below. We also omit the details of several cumbersome but straightforward calculations.

We apply the method by \citet{deblassie2004uniqueness}. Define the function
\[
F(y,\widetilde y) = (y^p - \widetilde y^p)^2
\]
where $p\in(1/2,1)$ is required to satisfy $\kappa>\nu^2(1-p)$. It then follows from Lemma~2.1 in \citet{deblassie2004uniqueness} together with~\eqref{eq:dYW} that $F(Y,\widetilde Y)$ satisfies It\^o's formula on the time interval $[0,\tau]$, provided $\varepsilon>0$ is small enough depending on $\kappa$, $\nu$, and $p$. Writing $F_1 = \partial_y F(Y,\widetilde Y)$, $F_2 = \partial_{\widetilde y} F(Y,\widetilde Y)$, etc., this means that the following calculations are rigorous. First,
\[
dF(Y,\widetilde Y) = F_1\,dY + F_2\,d\widetilde Y + \frac{1}{2}F_{11}\,d\langle Y,Y\rangle + F_{12}\,d\langle Y,\widetilde Y\rangle + \frac{1}{2}F_{22}\,d\langle \widetilde Y,\widetilde Y\rangle.
\]
Next, in view of~\eqref{eq:dYW}, a calculation yields
\begin{align*}
dF(Y,\widetilde Y) &= \text{(local martingale)} \\
&\qquad + \Big(\kappa(F_1+F_2)+\nu^2( F_{11} Y + 2F_{12} Y^{1/2}\widetilde Y^{1/2} +  F_{22}\widetilde Y) \Big) ( \|X\|^2+\|\widetilde X\|^2)\,dt \\
&\qquad + \Big( \kappa(F_1-F_2)+\nu^2( F_{11} Y - F_{22}\widetilde Y) \Big)( \|X\|^2-\|\widetilde X\|^2)\,dt \\
&\qquad - 2\nu^2 F_{12}Y^{1/2}\widetilde Y^{1/2} \|X-\widetilde X\|^2\,dt - d\,\nu^2(F_1 Y + F_2\widetilde Y)\,dt \\
&= \text{(local martingale)} + {\bf I}_1( \|X\|^2+\|\widetilde X\|^2)\,dt + {\bf I}_2\,dt + {\bf I}_3\,dt + {\bf I}_4\,dt.
\end{align*}
We now insert the explicit expression for $F$ and its derivatives. Define $Z=(Y^{p-1}-\widetilde Y^{p-1})(\widetilde Y^p - Y^p)$ and note that Lemma~A.1 in \citet{deblassie2004uniqueness} yields
\[
(Y^{p-1/2}-\widetilde Y^{p-1/2})^2 \le \frac{(2p-1)^2}{4p(1-p)} Z
\]
whenever $p\in(\frac{1}{2},1)$. Making use of this inequality, one obtains
\[
{\bf I}_1 = -2p\kappa Z + 2p\nu^2(1-p) Z + 2p^2\nu^2(Y^{p-1/2}-\widetilde Y^{p-1/2})^2 \le -2p\nu^2\left(\frac{\kappa}{\nu^2} - 1+p - \frac{(2p-1)^2}{4(1-p)}\right)Z.
\]
The expression in parentheses on the right-hand side is maximized by $p=1-\sqrt{2}/4\in(\frac{1}{2},1)$, which makes its value equal to $\kappa/\nu^2+1-\sqrt{2}$. Note that the hypothesis on $\kappa$ and $\nu$ ensures that $\kappa>\nu^2(1-p)$ holds for this choice of $p$, as was required above. Thus, on $[0,\tau]$,
\[
{\bf I}_1 ( \|X\|^2+\|\widetilde X\|^2) \le  - 4(1-\varepsilon)p\nu^2\left(\frac{\kappa}{\nu^2} + 1 - \sqrt{2} \right)Z.
\]
Next, we have
\[
{\bf I}_2 = -2p\nu^2\left(\frac{\kappa}{\nu^2} - 1+p\right)(Y^{p-1}+\widetilde Y^{p-1})(Y^p-\widetilde Y^p)(Y - \widetilde Y) - 2p^2\nu^2(Y^{2p-1}-\widetilde Y^{2p-1})(Y-\widetilde Y).
\]
Since $\kappa/\nu^2\ge 1-p$, one obtains ${\bf I}_2\le 0$. Furthermore, since
\[
-F_{12}Y^{1/2}\widetilde Y^{1/2}=2p^2Y^{p-1/2}\widetilde Y^{p-1/2}\le 2p^2\varepsilon^{2p-1}
\]
for $t\le\tau$, we get ${\bf I}_3 \le 4p^2\varepsilon^{2p-1}\nu^2\|X-\widetilde X\|^2$ on $[0,\tau]$. Finally, ${\bf I}_4=-2pd\,\nu^2 F\le0$. Putting together these estimates yields, for $t\le\tau$,
\begin{equation} \label{eq:F(Y,Ytilde)}
\begin{aligned}
F(Y_t,\widetilde Y_t) &\le \text{(local martingale)} - 4(1-\varepsilon)p\nu^2\left(\frac{\kappa}{\nu^2} + 1 - \sqrt{2} \right) \int_0^t Z_s\,ds \\
&\qquad + 4p^2\varepsilon^{2p-1}\nu^2 \int_0^t \|X_s-\widetilde X_s\|^2\,ds.
\end{aligned}
\end{equation}
We need the dynamics of $\|X-\widetilde X\|^2$. Similar calculations as those leading to~\eqref{eq:dYW} yield
\[
d\|X-\widetilde X\|^2 = \left( -2\kappa \|X-\widetilde X\|^2 + d\,\nu^2(Y^{1/2}-\widetilde Y^{1/2})^2\right)dt + 2\nu (Y^{1/2}-\widetilde Y^{1/2})(X-\widetilde X)^\top dW.
\]
Moreover, one has the inequality $(Y^{1/2}-\widetilde Y^{1/2})^2\le c\varepsilon^{2-2p}Z$ on $[0,\tau]$, where $c$ is a constant that only depends on $p$; see the proof of \cite[Lemma~3.6]{deblassie2004uniqueness}. In conjunction with~\eqref{eq:F(Y,Ytilde)} one then obtains, for $t\le\tau$,
\begin{align*}
F(Y_t,\widetilde Y_t) + \|X_t-\widetilde X_t\|^2 &\le \text{(local martingale)} \\
&\qquad + \left( d\,\nu^2c\varepsilon^{2-2p} - 4(1-\varepsilon)p\nu^2\left(\frac{\kappa}{\nu^2} + 1 - \sqrt{2} \right)\right)  \int_0^t Z_s\,ds\\
&\qquad  + (4p^2\varepsilon^{2p-1}\nu^2 - 2\kappa) \int_0^t \|X_s-\widetilde X_s\|^2\,ds.
\end{align*}
Since $\kappa/\nu^2+1-\sqrt{2}>0$ by assumption, we may choose $\varepsilon$ sufficiently small to make the finite variation terms nonpositive. Thus, on $[0,\tau]$, $F(Y_t,\widetilde Y_t) + \|X_t-\widetilde X_t\|^2$ is bounded from above by a nonnegative local martingale $M_t$ with $M_0=0$. Thus $M_t=0$ on $[0,\tau]$, implying that the same holds for the left-hand side. This yields $X=\widetilde X$ on $[0,\tau]$, as desired. The proof is complete.
\end{proof}

\begin{remark}
While the restriction on $\kappa$ and $\nu$ excludes some specifications, there is certainly an overlap with the set of parameters for which the boundary is attained. Indeed, by Proposition~\ref{P:boundary invariant} the boundary may be attained if and only if $\kappa/\nu^2<1$.
\end{remark}

\subsection{Existence of smooth densities}

If ${\rm BQ}(x,y)$ is a sum of squares and the state space is the unit sphere, Corollary~\ref{C:sphere}\ref{C:sphere:3} and H\"ormander's theorem lead to simple conditions under which the solution to~\eqref{eq:SDE} possesses a smooth density with respect to a suitable surface area measure. To state the precise result we introduce some notation. Let $A_0,\ldots,A_m$ be elements of $\Skew(d)$ and fix $x_0\in\Scal^{d-1}$. Define
\begin{align}
\mathfrak g_0 &= \{A_1,\ldots,A_m\} \nonumber\\
\mathfrak g_k &= \mathfrak g_{k-1} \cup \{[B,A_p] \colon B\in \mathfrak g_{k-1}, \ p=0,\ldots,m\} \qquad (k\ge 1) \nonumber\\
\mathfrak g &= {\rm span}\bigcup_{k\ge 0} \mathfrak g_k  \label{eq:lie g} \\
\mathfrak h &= \text{Lie algebra generated by $A_0,\ldots,A_m$}. \nonumber
\end{align}
Here $[A,B]=AB-BA$ is the usual matrix commutator. Note that $\mathfrak g$ is a Lie sub-algebra of $\mathfrak h$, and that $\mathfrak h=\mathfrak g+\R A_0$. In fact, $\mathfrak g$ is even an ideal in $\mathfrak h$, meaning that $[\mathfrak g,\mathfrak h]\subseteq\mathfrak g$. Let $\bm G$ denote the connected Lie subgroup of $SO(d)$ generated by $e^B$, $B\in\mathfrak g$, and $\bm H$ the connected Lie group similarly generated by $\mathfrak h$. Then $\mathfrak g$ (respectively $\mathfrak h$) is the Lie algebra of $\bm G$ (respectively $\bm H$). Finally, define $\bm Gx_0=\{Qx_0\colon Q\in\bm G\}$, the orbit of $x_0$ under $\bm G$, and similarly $\bm Hx_0$. These are smooth submanifolds of $\Scal^{d-1}$, and their tangent spaces are given by
\[
T_x(\bm Gx_0) = \mathfrak g x \qquad\text{and}\qquad T_x(\bm Hx_0) = \mathfrak h x.
\]
In view of the identities $Q^{-1}\mathfrak g Q=\mathfrak g$ and $Q^{-1}\mathfrak h Q=\mathfrak h$ for all $Q\in\bm G$, it follows that
\begin{equation} \label{eq:TG TH}
T_{Qx}(\bm Gx_0) = Q\,T_x(\bm G x_0) \quad\text{and}\quad T_{Qx}(\bm Hx_0) = Q\,T_x(\bm H x_0), \qquad Q \in \bm G.
\end{equation}
All these properties are well-known, and simply reflect the fact that $\bm Gx_0$ and $\bm Hx_0$ are homogeneous spaces for $\bm G$ and $\bm H$, respectively.

The natural state space for solutions to \eqref{eq:odYX} starting from $x_0\in\Scal^{d-1}$ is ${\bm H} x_0$. However, by slightly adjusting such a solution one obtains a process which remains in $\bm Gx_0$. Specifically, one has the following lemma.

\begin{lemma} \label{L:Zt inv}
Let $X$ be the solution to~\eqref{eq:odYX} with $X_0=x_0$, and define a process $Z$ by
\[
Z_t = e^{-tA_0} X_t.
\]
Then $Z_t \in \bm Gx_0$ for all $t\ge0$.
\end{lemma}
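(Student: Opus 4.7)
The plan is to differentiate $Z_t = e^{-tA_0} X_t$ via the Stratonovich product rule and then realize $Z$ as $Q_t x_0$ for a $\bm G$-valued auxiliary process $Q$. Since $t \mapsto e^{-tA_0}$ has bounded variation and commutes with $A_0$, and since $dX_t = (\circ\,dY_t)X_t = A_0 X_t\,dt + \sum_{p=1}^m A_p X_t \circ d\widehat W^p_t$, the Stratonovich product rule yields
\begin{align*}
dZ_t &= -A_0 Z_t\,dt + e^{-tA_0} A_0 X_t\,dt + \sum_{p=1}^m e^{-tA_0} A_p X_t \circ d\widehat W^p_t \\
&= \sum_{p=1}^m \widetilde A_p(t)\, Z_t \circ d\widehat W^p_t,
\end{align*}
where $\widetilde A_p(t) := e^{-tA_0} A_p\, e^{tA_0}$; the two drift terms cancel because $A_0$ and $e^{-tA_0}$ commute.

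The key algebraic observation is that $\widetilde A_p(t)\in\mathfrak g$ for every $t\ge 0$ and every $p\in\{1,\ldots,m\}$. Indeed, $\widetilde A_p(t) = \mathrm{Ad}(e^{-tA_0})A_p$ admits the absolutely convergent expansion
\begin{equation*}
\widetilde A_p(t) = \sum_{k=0}^\infty \frac{(-t)^k}{k!}\,\mathrm{ad}(A_0)^k A_p,
\end{equation*}
and a straightforward induction from the defining recursion of $\mathfrak g_k$ shows that each iterated commutator $\mathrm{ad}(A_0)^k A_p$ belongs to $\mathfrak g_k\subseteq\mathfrak g$. Since $\mathfrak g$ is a finite-dimensional, hence closed, subspace of $\Skew(d)$, the limit also lies in $\mathfrak g$.

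To conclude, I would lift to the matrix SDE $dQ_t = \sum_{p=1}^m \widetilde A_p(t)\, Q_t \circ d\widehat W^p_t$ with $Q_0 = \Id$. Because each driving vector field $Q\mapsto \widetilde A_p(t)Q$ is left-invariant with generator in $\mathfrak g$, it is tangent to $\bm G$ at every point, so standard existence theory for Stratonovich SDEs on Lie groups produces a solution with $Q_t\in\bm G$ for all $t\ge 0$. Then $Q_t x_0$ satisfies the same Stratonovich SDE as $Z_t$ with the same initial condition $x_0$, and pathwise uniqueness (immediate from the linearity of the coefficients) gives $Z_t = Q_t x_0 \in \bm G x_0$. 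The main technical step is exactly this invariance argument on $\bm G$; an equivalent alternative is to invoke a stochastic Nagumo/Frobenius-type theorem asserting that a closed submanifold is invariant under a Stratonovich SDE whenever all driving vector fields are tangent to it, applied to $\bm G x_0\subset\R^d$, whose tangent space at $z$ is $\mathfrak g z$.
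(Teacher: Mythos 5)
Your proposal is correct and follows essentially the same route as the paper: derive the time-inhomogeneous Stratonovich SDE $dZ_t=\sum_{p=1}^m e^{-tA_0}A_pe^{tA_0}Z_t\circ d\widehat W^p_t$, show that $e^{-tA_0}A_pe^{tA_0}\in\mathfrak g$ (the paper does this via the recursion $f^{(k+1)}(0)=[f^{(k)}(0),A_0]$ and analyticity, which is the same computation as your $\mathrm{Ad}=e^{-t\,\mathrm{ad}(A_0)}$ expansion), and conclude by a stochastic invariance theorem for submanifolds whose tangent spaces contain the driving vector fields — exactly your "equivalent alternative," for which the paper cites \citet[Theorem~1.2.9]{hsu2002stochastic}.
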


\begin{proof}
Since $\mathfrak g$ is an ideal in $\mathfrak h$, we have for each $p\in\{1,\ldots,m\}$,
\begin{equation} \label{eq:z tangent}
e^{-tA_0}A_p e^{tA_0}z \in \mathfrak g z = T_z(\bm Gx_0) \quad\text{for all}\quad z\in \bm Gx_0, \quad t\ge0.
\end{equation}
To see this, write $f(t)=e^{-tA_0}A_p e^{tA_0}$. Then $f^{(k+1)}(0)=[f^{(k)}(0),A_0]$ for all $k\in\N$. Since $\mathfrak g$ is an ideal in $\mathfrak h$ one has $f'(0)=[A_p,A_0]\in\mathfrak g$, whence $f^{(k)}(0)\in\mathfrak g$ for all $k\in\N$ by induction. Thus $e^{-tA_0}A_pe^{tA_0}=f(t)\in\mathfrak g$ since $f$ is analytic with infinite radius of convergence. Hence \eqref{eq:z tangent} follows. Now, a brief calculation shows that $Z$ satisfies the time-inhomogeneous SDE
\[
dZ_t = e^{-tA_0}A_1e^{tA_0} Z_t \circ dW_t^1 + \cdots + e^{-tA_0}A_me^{tA_0} Z_t \circ dW_t^m,
\]
which admits a unique strong solution. In view of~\eqref{eq:z tangent}, the vector fields on the right-hand side are tangent to $\bm Gx_0$, which by \citet[Theorem~1.2.9]{hsu2002stochastic} implies that $Z$ takes values there.
\end{proof}

One now has the following fairly precise result regarding existence of smooth densities in the case where ${\rm BQ}(x,y)$ admits a sum of squares representation.

\begin{theorem} \label{T:sphere density}
Let $X$ be the solution to~\eqref{eq:odYX} with $X_0=x_0\in\Scal^{d-1}$; in particular, ${\rm BQ}(x,y)$ is a sum of squares of polynomials. Then $X$ takes values in $\bm H x_0$. Moreover, the following conditions are equivalent:
\begin{enumerate}
\item\label{T:sd:density} $X_t$ has a smooth density with respect to surface area measure on $\bm H x_0$ for all $t>0$.
\item\label{T:sd:H=G} $\bm H x_0=\bm G x_0$.
\item\label{T:sd:Axg} $A_0x_0 \in \mathfrak g x_0$.
\end{enumerate}
\end{theorem}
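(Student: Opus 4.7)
The plan is to prove the invariance first and then set up the three equivalences by combining a short Lie-theoretic dichotomy with H\"ormander's theorem. In Stratonovich form, \eqref{eq:odYX} reads $dX_t=A_0X_t\,dt+\sum_{p=1}^m A_pX_t\circ d\widehat W^p_t$, whose driving vector fields $V_0(x)=A_0x$, $V_p(x)=A_px$ all take values in $\mathfrak h x=T_x(\bm H x_0)$ whenever $x\in\bm H x_0$. So Theorem~1.2.9 of \citet{hsu2002stochastic}, applied exactly as in the proof of Lemma~\ref{L:Zt inv}, forces $X_t\in\bm H x_0$ for all $t\ge 0$. Throughout what follows I would also use that $\mathfrak g$ being an ideal in $\mathfrak h$ makes $\bm G$ normal in $\bm H$, so $h^{-1}\mathfrak g h=\mathfrak g$ for every $h\in\bm H$ and in particular $\mathfrak g(hx_0)=h\mathfrak g x_0$, $\mathfrak h(hx_0)=h\mathfrak h x_0$.

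The equivalence \ref{T:sd:H=G}$\Longleftrightarrow$\ref{T:sd:Axg} is purely Lie-theoretic: since $\mathfrak h=\mathfrak g+\R A_0$, the identity $\mathfrak g x_0=\mathfrak h x_0$ is literally the same as $A_0 x_0\in \mathfrak g x_0$, while $\mathfrak g x_0=\mathfrak h x_0$ is equivalent to $\bm G x_0=\bm H x_0$ by a dimension-plus-connectedness argument: using normality, the $\bm G$-orbits foliating $\bm H x_0$ all have the common dimension $\dim \mathfrak g x_0$, so when this equals $\dim\mathfrak h x_0=\dim\bm H x_0$ each orbit is open in the connected manifold $\bm H x_0$, hence unique. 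For \ref{T:sd:H=G}$\Longrightarrow$\ref{T:sd:density}, I would apply H\"ormander's theorem to the SDE viewed intrinsically on $\bm Hx_0$. A direct calculation gives $[V_A,V_B]=V_{BA-AB}$, so the span at any $x$ of $V_1,\ldots,V_m$ together with all iterated brackets against $V_0,\ldots,V_m$ is precisely $\mathfrak g x$. The hypothesis yields $\mathfrak g x_0=\mathfrak h x_0$; by the homogeneity identity~\eqref{eq:TG TH}, extended from $\bm G$ to $\bm H$ through the normality remark above, this equality propagates to every $x\in\bm Hx_0$, giving $\mathfrak g x=T_x(\bm H x_0)$ pointwise. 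The parabolic H\"ormander theorem then produces a smooth density for $X_t$ on $\bm H x_0$ for every $t>0$.

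For the remaining implication \ref{T:sd:density}$\Longrightarrow$\ref{T:sd:H=G} I would argue by contrapositive. Lemma~\ref{L:Zt inv} writes $X_t=e^{tA_0}Z_t$ with $Z_t\in\bm Gx_0$, and normality gives $X_t\in e^{tA_0}\bm Gx_0=\bm G e^{tA_0}x_0$, i.e.\ $X_t$ is concentrated on a single $\bm G$-orbit of dimension $\dim\mathfrak gx_0$. If $\bm Gx_0\ne\bm Hx_0$ then $\dim\mathfrak g x_0<\dim\mathfrak h x_0=\dim \bm H x_0$, so this orbit is a proper submanifold of $\bm Hx_0$ with vanishing surface-area measure, whence the law of $X_t$ is singular with respect to that measure and admits no density, contradicting \ref{T:sd:density}. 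The main obstacle I anticipate is the intrinsic application of H\"ormander on the submanifold $\bm Hx_0$: identifying the Lie algebra of driving vector fields with the matrix algebra~$\mathfrak g$ in a way that matches the parabolic formulation, and verifying that the pointwise spanning condition at $x_0$ genuinely propagates to the full orbit via normality and~\eqref{eq:TG TH}; the rest is essentially a careful unwinding of definitions.
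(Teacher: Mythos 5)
Your proposal is correct and follows essentially the same route as the paper: tangency of the driving vector fields (equivalently, Lemma~\ref{L:Zt inv}) for the invariance of $\bm Hx_0$, the identity $\mathfrak h x_0=\mathfrak g x_0+\R A_0x_0$ for \ref{T:sd:H=G}$\Leftrightarrow$\ref{T:sd:Axg}, the parabolic H\"ormander condition (which coincides with $\mathfrak g x=T_x(\bm Hx_0)$) for \ref{T:sd:H=G}$\Rightarrow$\ref{T:sd:density}, and the observation that $X_t$ is supported on the lower-dimensional null set $e^{tA_0}\bm Gx_0$ for \ref{T:sd:density}$\Rightarrow$\ref{T:sd:H=G}. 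The only cosmetic difference is that you phrase the last implication as a contrapositive and invoke normality of $\bm G$ in $\bm H$ where the paper uses~\eqref{eq:TG TH}; both are valid.
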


\begin{proof}
It follows from Lemma~\ref{L:Zt inv} and the definition of $\bm H$ that $X$ takes values in $\bm H x_0$. We now prove that the stated conditions are equivalent. To this end, first note that since $\bm Gx_0\subseteq\bm Hx_0$, and $\bm Hx_0$ is connected due to the connectedness of $\bm H$, one has $\bm G x_0 = \bm H x_0$ if and only if $T_x(\bm Gx_0)=T_x(\bm Hx_0)$ for all $x\in \bm Gx_0$. Thanks to \eqref{eq:TG TH} this yields
\begin{align}
\bm G x_0 = \bm H x_0 &\qquad\Longleftrightarrow\qquad \text{$T_x(\bm Gx_0)=T_x(\bm Hx_0)$ for some $x\in \bm Gx_0$}.  \label{eq:GxHxequiv0} \\
&\qquad\Longleftrightarrow\qquad \text{$T_{x_0}(\bm Gx_0)=T_{x_0}(\bm Hx_0)$}. \nonumber
\end{align}
The latter condition is equivalent to $\mathfrak g x_0 = \mathfrak h x_0$. Since $\mathfrak h=\mathfrak g+\R A_0$, this holds if and only if $A_0x_0\in \mathfrak gx_0$. This proves \ref{T:sd:H=G} $\Longleftrightarrow$ \ref{T:sd:Axg}.

To prove \ref{T:sd:density} $\Longrightarrow$ \ref{T:sd:H=G}, let $\sigma$ denote surface area measure on $\bm H x_0$ and note that for any measurable subset $U\subseteq \bm Hx_0$, one has $\sigma(U)>0$ if and only if $\sigma(e^{tA_0}U)>0$, where $t\in\R$ is arbitrary. Fix some $t>0$. Since Lemma~\ref{L:Zt inv} yields $X_t\in e^{t A_0}\bm G x_0$, it follows that $\sigma(e^{t A_0}\bm G x_0)>0$ and hence $\sigma(\bm G x_0)>0$. This in turn implies that there exists some $x\in \bm Gx_0$ for which the tangent space $T_x(\bm Gx_0)$ cannot be a proper subspace of $T_x(\bm Hx_0)$; that is, $T_x(\bm Gx_0)=T_x(\bm Hx_0)$. Thus \ref{T:sd:H=G} follows by~\eqref{eq:GxHxequiv0}

It remains to prove \ref{T:sd:H=G} $\Longrightarrow$ \ref{T:sd:density}. Consider the vector fields $V_p$ given by $V_p(x)=A_px$, appearing in the expression~\eqref{eq:Horm} for the generator $\Gcal$. One has $[V_p,V_q](x)=[A_p,A_q]x$, where $[A_p,A_q]=A_pA_q-A_qA_p$. H\"ormander's parabolic condition (see \citet{hormander1967hypoelliptic}, or Theorem~1.3 in~\citet{hairer2011malliavin} for a formulation suited to our current setting) is therefore equivalent to requiring $T_x(\bm H x_0) = \mathfrak g x$ for all $x\in \bm H x_0$. This holds thanks to the assumption $\bm H x_0=\bm Gx_0$. Applying H\"ormander's theorem in each smooth coordinate chart of $\bm Hx_0$ now gives the existence of a smooth density.
\end{proof}

\begin{corollary} \label{C:sphere density}
Let $X$ be the solution to~\eqref{eq:odYX} with $X_0=x_0\in\Scal^{d-1}$. Then $X_t$ has a smooth density with respect to surface area measure on $\Scal^{d-1}$ for all $t>0$ if and only if $\mathfrak g = \Skew(d)$.
\end{corollary}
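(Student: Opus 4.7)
The plan is to reduce Corollary~\ref{C:sphere density} to Theorem~\ref{T:sphere density}, combined with the elementary observation that a proper smooth submanifold of $\Scal^{d-1}$ has zero surface area measure on the ambient sphere. By Theorem~\ref{T:sphere density}, $X_t$ is supported on $\bm H x_0 \subseteq \Scal^{d-1}$. If $\bm H x_0 \subsetneq \Scal^{d-1}$, then $\bm H x_0$ has strictly smaller dimension as a submanifold and hence surface area measure zero on $\Scal^{d-1}$, so no density with respect to surface area measure on $\Scal^{d-1}$ can exist. When $\bm H x_0 = \Scal^{d-1}$ the two surface measures agree, and Theorem~\ref{T:sphere density} further requires $\bm G x_0 = \bm H x_0$ for smoothness. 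Thus existence of a smooth density on $\Scal^{d-1}$ is equivalent to $\bm G x_0 = \bm H x_0 = \Scal^{d-1}$, and the task reduces to showing this is in turn equivalent to $\mathfrak g = \Skew(d)$.

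For the $\Longleftarrow$ direction, if $\mathfrak g = \Skew(d)$ then $\bm G$ is a connected Lie subgroup of $SO(d)$ whose Lie algebra is all of $\mathfrak{so}(d)$; since $SO(d)$ itself is connected with this Lie algebra, we must have $\bm G = SO(d)$, and therefore $\bm G x_0 = \Scal^{d-1}$. Moreover $A_0 \in \Skew(d) = \mathfrak g$ gives $\mathfrak h = \mathfrak g + \R A_0 = \mathfrak g$, so $\bm H = \bm G$ as well, and both equalities in the reduced condition hold.

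For the $\Longrightarrow$ direction, the hypothesis $\bm G x_0 = \Scal^{d-1}$ translates infinitesimally into $\mathfrak g x_0 = x_0^\perp$. Using that $\mathfrak g$ is an ideal of $\mathfrak h$ and that $\bm H$ acts transitively on $\Scal^{d-1}$ (via the conjugation identity $Q^{-1}\mathfrak g Q = \mathfrak g$ for $Q \in \bm H$ already exploited in the proof of Theorem~\ref{T:sphere density}), this pointwise identity propagates to $\mathfrak g x = x^\perp$ for every $x \in \Scal^{d-1}$. The main obstacle I anticipate is the final step, namely concluding $\mathfrak g = \Skew(d)$ from this pointwise infinitesimal transitivity, since in principle proper Lie subalgebras of $\Skew(d)$ can act transitively on the sphere (think of $\mathfrak{su}(d/2)$ in $\mathfrak{so}(d)$ for even $d$, or the Montgomery--Samelson list of exceptional transitive subgroups). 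To close this gap I would need to exploit the extra structure of $\mathfrak g$ arising from the SDE---in particular its closure under bracketing with $A_0$ and its origin as the Lie algebra generated by skew matrices $A_1,\ldots,A_m$ with $m = \binom{d}{2}$---to rule out the exceptional subalgebras and force $\mathfrak g$ to coincide with the full $\Skew(d)$.
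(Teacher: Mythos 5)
Your reduction to Theorem~\ref{T:sphere density}, together with the observation that a proper connected submanifold $\bm H x_0\subsetneq\Scal^{d-1}$ is a nullset for surface measure, is the right way to read the corollary off the theorem, and your ``if'' direction ($\mathfrak g=\Skew(d)$ forces $\bm G=SO(d)$, hence $\bm G x_0=\bm H x_0=\Scal^{d-1}$) is complete. The obstacle you flag in the ``only if'' direction, however, is not a technicality that extra structure will remove: there is no extra structure to exploit. The number $m={d\choose 2}$ in~\eqref{eq:skBM} is only the number of driving Brownian motions; nothing prevents most of the $A_p$ from vanishing or being dependent. Concretely, take $d=4$, let $A_1,A_2,A_3\in\Skew(4)$ be left multiplication by the unit quaternions $i,j,k$ on $\R^4\cong\mathbb H$, and set $A_0=A_4=A_5=A_6=0$. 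Then $\{x,A_1x,A_2x,A_3x\}$ is an orthogonal basis of $\R^4$ with all vectors of norm $\|x\|$, so $c(x)=\sum_p A_p\,xx^\top A_p^\top=\|x\|^2\Id-xx^\top$ and $X$ is standard Brownian motion on $\Scal^{3}$, whose heat kernel is a smooth positive density for every $t>0$; yet $\mathfrak g={\rm span}\{A_1,A_2,A_3\}\cong\mathfrak{su}(2)$ is a proper three-dimensional subalgebra of the six-dimensional $\Skew(4)$. This is exactly one of the Montgomery--Samelson transitive actions you anticipate, realized by admissible data for~\eqref{eq:odYX}, so the step ``transitivity of $\mathfrak g$ implies $\mathfrak g=\Skew(d)$'' genuinely fails and cannot be repaired.

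What your argument does establish, correctly, is that $X_t$ has a smooth density with respect to surface measure on $\Scal^{d-1}$ for all $t>0$ if and only if $\bm G x_0=\Scal^{d-1}$, equivalently $\mathfrak g\,x=x^\perp$ for all $x\in\Scal^{d-1}$ (H\"ormander's condition on the whole sphere). The condition $\mathfrak g=\Skew(d)$ is sufficient for this but not necessary; note also that $\mathfrak g$ depends on the chosen matrices $A_0,\ldots,A_m$ and not only on the law of $X$ (the same spherical Brownian motion arises from $A_p=D_p$, $p=1,\ldots,m$, for which $\mathfrak g=\Skew(4)$), whereas existence of a smooth density is a property of the law alone. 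The paper offers no proof of this corollary, treating it as immediate from Theorem~\ref{T:sphere density}; the deduction is immediate only up to the equivalence with $\bm G x_0=\Scal^{d-1}$, and you should stop there rather than try to force $\mathfrak g=\Skew(d)$.
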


\begin{example}
As a simple illustration of the distinction between $\bm G$ and $\bm H$, let $d=4$ and consider the skew-symmetric matrices
\[
A_0 = \begin{pmatrix}0&1&0&0\\-1&0&0&0\\0&0&0&0\\0&0&0&0\end{pmatrix},
\qquad\qquad
A_1 = \begin{pmatrix}0&0&0&0\\0&0&0&0\\0&0&0&1\\0&0&-1&0\end{pmatrix}.
\]
Then we have $A_0A_1=A_1A_0=0$, so that $\mathfrak g={\rm span}\{A_1\}$ and $\mathfrak h={\rm span}\{A_0,A_1\}$. Moreover, $e^{-tA_0}A_1e^{tA_0}=A_1$, whence the process $Z$ in Lemma~\ref{L:Zt inv} satisfies $dZ_t=A_1Z_t\circ dW_t^1$. It follows that $(Z^3_t)^2+(Z^4_t)^2$ is constant, showing that $Z$ is confined to a circle embedded in $\R^4$, namely ${\bm G}x_0=e^{\R A_1}x_0$. On the other hand, the process $X$ given by $X_t=e^{tA_0}Z_t$ is not restricted to any fixed subset of ${\bm H}x_0$, although for a fixed time $t\ge 0$ it necessarily lies in the transformed circle $e^{tA_0}\bm G x_0$ (we suppose that $A_0x_0\ne 0$ to avoid a trivial situation). This is a nullset in $\bm H x_0$, so a density cannot exist.
\end{example}

By means of a projection argument, Corollary~\ref{C:sphere density} can be used to derive conditions that guarantee the existence of smooth densities for a particular class of polynomial diffusions taking values in the ball $\Bcal^d$.

\begin{corollary}
Let $X$ be the solution to~\eqref{eq:SDE:odYX} with $X_0=x_0\in\Scal^{d-1}$ and $\widehat B=-\frac 12\alpha$. Write $\alpha=a_1a_1^\top + \cdots +a_da_d^\top$ for some vectors $a_1,\ldots,a_d\in\R^d$, and define matrices $\widetilde A_p\in\Skew(d+1)$ by
\[
\widetilde A_p = \begin{pmatrix}A_p & 0 \\ 0 & 0 \end{pmatrix} \text{ for } p=0,\ldots,m,
\quad\text{and}\quad
\widetilde A_p = \begin{pmatrix}0 & a_i \\ -a_i^\top & 0 \end{pmatrix} \text{ for } p=m+i,\ i=1,\ldots,d.
\]
Let $\widetilde{\mathfrak g}$ be defined as in~\eqref{eq:lie g} with $A_p$ replaced by $\widetilde A_p$. If $\widetilde{\mathfrak g}=\Skew(d+1)$, then for all $t>0$, $X_t$ has a density that is smooth on the interior of $\Bcal^d$.
\end{corollary}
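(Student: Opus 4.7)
The plan is to lift $X$ to an $\Scal^d$-valued process to which Corollary~\ref{C:sphere density} applies, and then push the resulting smooth density forward through the projection $\pi:\R^{d+1}\to\R^d$ onto the first $d$ coordinates. Consider the Stratonovich SDE
\[
d\widetilde X_t=(\circ\,d\widetilde Y_t)\,\widetilde X_t,\qquad \widetilde X_0=(x_0,0)\in\Scal^d,
\]
where $\widetilde Y$ is a $\Skew(d+1)$-valued correlated Brownian motion with drift built from $\widetilde A_0,\ldots,\widetilde A_{m+d}$. Since each $\widetilde A_p$ is skew and the effective It\^o drift balances the trace of the diffusion matrix, Corollary~\ref{C:sphere} in dimension $d+1$ yields a unique strong $\Scal^d$-valued solution.

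Next I would identify $X_t$ with the projection $\pi(\widetilde X_t)$. Writing $\widetilde X_t=(X_t,U_t)$ with $U_t^2=1-\|X_t\|^2$, the block structure of the $\widetilde A_p$ gives
\[
\begin{aligned}
dX_t&=A_0X_t\,dt+\sum_{p=1}^m A_pX_t\circ dW^p_t+U_t\sum_{i=1}^d a_i\circ d\widehat W^i_t,\\
dU_t&=-\sum_{i=1}^d a_i^\top X_t\circ d\widehat W^i_t.
\end{aligned}
\]
Converting to It\^o form, the standard correction $\tfrac12\sum_p A_p^2X_t\,dt$ combines with the correction $-\tfrac12\alpha X_t\,dt$ coming from $d\langle U,\widehat W^i\rangle_t=-a_i^\top X_t\,dt$; using $A_p^\top A_p=-A_p^2$ together with~\eqref{eq:T:SDE:1} for $b=0$ and $\widehat B=-\tfrac12\alpha$, the total It\^o drift equals $BX_t\,dt$. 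The resulting diffusion matrix $\widehat\sigma(X_t)=(A_1X_t,\ldots,A_mX_t,U_ta_1,\ldots,U_ta_d)$ satisfies $\widehat\sigma\widehat\sigma^\top=c(X_t)+U_t^2\alpha=a(X_t)$, so $\pi(\widetilde X_t)$ is a $\Bcal^d$-valued weak solution of~\eqref{eq:SDE:odYX} and Lemma~\ref{L:law} identifies its law with $\P_{x_0}$.

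Under the assumption $\widetilde{\mathfrak g}=\Skew(d+1)$, Corollary~\ref{C:sphere density} yields a smooth density $\widetilde f_t$ for $\widetilde X_t$ with respect to surface measure on $\Scal^d$ for every $t>0$. Away from the equator, the local inverses $x\mapsto(x,\pm\sqrt{1-\|x\|^2})$ of $\pi$ are smooth on $\{\|x\|<1\}$ and pull the surface measure back to $(1-\|x\|^2)^{-1/2}\,dx$. Summing the contributions of the two hemispheres yields the density of $X_t$ on the open ball,
\[
f_t(x)=\frac{\widetilde f_t\bigl(x,\sqrt{1-\|x\|^2}\bigr)+\widetilde f_t\bigl(x,-\sqrt{1-\|x\|^2}\bigr)}{\sqrt{1-\|x\|^2}},
\]
which is smooth on the interior of $\Bcal^d$ because $\widetilde f_t$ is smooth on $\Scal^d$ and $\sqrt{1-\|x\|^2}$ is bounded away from zero on compact subsets of $\{\|x\|<1\}$.

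The main obstacle is the Stratonovich-to-It\^o bookkeeping in the second paragraph. One must verify that the two corrections, from the tangential blocks $\widetilde A_1,\ldots,\widetilde A_m$ and from the off-diagonal blocks $\widetilde A_{m+1},\ldots,\widetilde A_{m+d}$, combine to exactly the drift $BX_t$ dictated by $\widehat B=-\tfrac12\alpha$ via~\eqref{eq:T:SDE:1}. Once this identification is made, the remainder reduces to uniqueness in law and a routine two-sheeted change of variables that preserves smoothness on the interior.
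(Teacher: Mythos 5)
Your argument is correct and follows essentially the same route as the paper's proof: lift to the sphere $\Scal^d$ via the block matrices $\widetilde A_p$, identify the law of the projection with $\P_{x_0}$ through the Stratonovich-to-It\^o drift computation and Lemma~\ref{L:law}, invoke Corollary~\ref{C:sphere density} on $\Scal^d$, and push the smooth density down through the two hemispheres with the Jacobian factor $(1-\|x\|^2)^{-1/2}$. The drift bookkeeping you flag as the main obstacle does check out, exactly as in the paper.
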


\begin{proof}
Let $z_0=(x_0,\sqrt{1-\|x_0\|^2})\in\Scal^d$, and define the $\Scal^{d}$-valued process $Z$ as the unique solution to the SDE
\[
dZ_t = \left(\widetilde A_0dt + (\circ\,d\widetilde W^1_t)\, \widetilde A_1 + \cdots (\circ\,d\widetilde W^{m+d}_t)\, \widetilde A_{m+d}\right) Z_t, \qquad Z_0=z_0,
\]
where  $\widetilde W=(\widetilde W^1,\ldots,\widetilde W^{m+d})$ is an $(m+d)$-dimensional Brownian motion. Let
\[
\pi:\Scal^d\to\R^d, \quad (z_1,\ldots,z_d,z_{d+1})\mapsto (z_1,\ldots,z_d)
\]
be the canonical projection of $\Scal^d$ onto $\R^d$, and define $X'=\pi(Z)$. Then $X'$ satisfies
\[
d X_t' = A_0 X_t'\,dt + Z_{d+1,t} \sum_{i=1}^d a_i\circ d\widetilde W^{m+i}_t + \Big( \sum_{p=1}^m A_p\circ d\widetilde W^p_t\Big) X_t'.
\]
Moreover, since
\[
dZ_{d+1,t} = -\sum_{i=1}^d a_i^\top X'_t \circ d\widetilde W^{m+i}_t,
\]
one has $\langle Z_{d+1},\widetilde W^{m+i}\rangle_t = - a_i^\top X'_t dt$. Thus, writing the middle term in the above expression for $dX'_t$ in It\^o form, one obtains
\[
d X_t' = \left(A_0 - \frac{1}{2}\alpha\right) X_t'\,dt + Z_{d+1,t} \sum_{i=1}^d a_i\, d\widetilde W^{m+i}_t + \Big( \sum_{p=1}^m A_p\circ d\widetilde W^p_t\Big) X_t'.
\]
Note that $Z_{d+1,t} = \sqrt{1-\|X_t'\|^2}\,{\rm sign}(Z_{d+1,t})$. Thus $X'$ satisfies an SDE of the form~\eqref{eq:SDE}, and its generator coincides with that of $X$. Thus $X$ and $X'$ have the same law by Lemma~\ref{L:law}.

Next, let $\sigma$ be surface area measure on $\Scal^d$. By Corollary~\ref{C:sphere density}, $Z_t$ has a smooth density $p_Z(t;z)$ with respect to~$\sigma$ for all $t>0$. We claim that this implies that $X_t'$ has a density, smooth on the interior of $\Bcal^d$, for every $t>0$. Since $X$ and $X'$ have the same law, this will complete the proof of the corollary.

Let $\Scal^d_+=\{z\in\Scal^d\colon z_{d+1}>0\}$ denote the upper hemisphere of $\Scal^d$. Let $\mu=\sigma|_{\Scal^d_+}\circ\pi^{-1}$ be the pushforward under $\pi$ of the restriction of $\sigma$ to $\Scal^d_+$, and recall that one has
\[
\mu(dx) = (1-\|x\|^2)^{-1/2}\,dx, \qquad x\in\Bcal^d.
\]
For any measurable subset $A\subseteq\Bcal^d$ we compute:
\begin{align*}
\P(X_t'\in A) &= \P(Z_t\in\pi^{-1}(A)) = \int_{\pi^{-1}(A)} p_Z(t;z)\sigma(dz) \\
&= \int_{\Scal^d_+\cap \pi^{-1}(A)} \big( p_Z(t;z_1,\ldots,z_d,z_{d+1}) + p_Z(t;z_1,\ldots,z_d,-z_{d+1}) \big) \sigma(dz) \\
&= \int_A \left( p_Z(t;x,\sqrt{1-\|x\|^2}) + p_Z(t;x,-\sqrt{1-\|x\|^2}) \right) \mu(dx).
\end{align*}
Thus $X'_t$, hence $X_t$, has a density $p_X(t;x)$ given by
\[
p_X(t;x) = \left( p_Z(t;x,\sqrt{1-\|x\|^2}) + p_Z(t;x,-\sqrt{1-\|x\|^2}) \right) (1-\|x\|^2)^{-1/2},
\]
which is smooth on the interior of $\Bcal^d$.
\end{proof}

\section{Proofs for Section~\ref{S:sos}} \label{S:algebra}

In this section we collect the proofs of the main results in Section~\ref{S:sos}. In particular, the proof of Theorem~\ref{T:sos} relies on results regarding the space $\Kcal$ defined in~\eqref{eq:DK}; we develop these results in Section~\ref{S:setK}.

\subsection{Proof of Theorem~\ref{T:charC}} \label{S:proof_charC}

Recall that we set ${d \choose k}=0$ for $k>d$. Consider the linear spaces
\begin{align*}
\Dcal &= \left\{ c: \R^d \to \S^d : c_{ij} \in \Hom_2 \text{ for all } i,j\right\} \\
\Ecal &= \left\{ f: \R^d \to \R^d : f_i \in \Hom_3 \text{ for all } i\right\}
\end{align*}
as well as the linear map $T:\Dcal\to\Ecal$ defined by $(Tc)(x)=c(x)x$. Then $\ker T=\Ccal$, so we need to prove that $\ker T=\{c_H:H\in\S^d\}$. We make the following observations.
\begin{enumerate}
\item $\dim \Dcal = {d+1\choose 2}^2$, since each $c\in\Dcal$ is specified by ${d+1\choose 2}$ independently chosen elements of $\Hom_2$, each of which is specified by $\dim\Hom_2={d+1\choose 2}$ independent parameters.
\item $\dim\Ecal = d{d+2\choose 3}$, since each $f\in\Ecal$ is specified by $d$ independently chosen elements of $\Hom_3$, each of which is specified by $\dim\Hom_3={d+2\choose 3}$ independent parameters.
\item\label{T:charC:2} $\dim \Dcal - \dim \Ecal = 2{d\choose 4} + 3{d\choose 3} + {d\choose 2}$. This follows from a direct calculation.
\item\label{T:charC:3} $T$ is surjective. To see this, fix any $i,j,k,l\in\{1,\ldots,d\}$. We need to find $c\in\Dcal$ such that $c(x)x=x_ix_jx_ke_l$. One such $c$ is given by $c(x)=x_i( x_jE_{kl} + x_kE_{jl} - x_lE_{jk})/2$, where we define $E_{st}=e_se_t^\top + e_te_s^\top$.
\end{enumerate}
Since the collection $\{D_p\,xx^\top D_q^\top + D_q\,xx^\top D_p^\top : 1\le p<q\le m\}$ is a subset of $\ker T$, the rank-nullity theorem together with \ref{T:charC:2} and \ref{T:charC:3} implies~\eqref{T:charC:1} once we prove that this collection contains $2{d\choose 4} + 3{d\choose 3} + {d\choose 2}$ linearly independent elements. To this end, recall the elementary skew-symmetric matrices
\[
S_{ij} = e_ie_j^\top - e_je_i^\top
\]
appearing in~\eqref{eq:SijD}, and consider the sub-collection
\begin{align*}
\Bcal \ =\ &\{ S_{ij}\,xx^\top S_{kl}^\top + S_{kl}\,xx^\top S_{ij}^\top : 1\le i<j<k<l\le d \} \\
\cup &\{ S_{ik}\,xx^\top S_{jl}^\top + S_{jl}\,xx^\top S_{ik}^\top : 1\le i<j<k<l\le d \} \\
\cup &\{ S_{ij}\,xx^\top S_{ik}^\top + S_{ik}\,xx^\top S_{ij}^\top : 1\le i<j<k\le d \} \\
\cup &\{ S_{ij}\,xx^\top S_{jk}^\top + S_{jk}\,xx^\top S_{ij}^\top : 1\le i<j<k\le d \} \\
\cup &\{ S_{ik}\,xx^\top S_{jk}^\top + S_{jk}\,xx^\top S_{ik}^\top : 1\le i<j<k\le d \} \\
\cup &\{ 2 S_{ij}\,xx^\top S_{ij}^\top : 1\le i<j\le d \}.
\end{align*}
It contains $2{d\choose 4} + 3{d\choose 3} + {d\choose 2}$ elements, which as we now show are linearly independent. For any $i<j$, the matrix $2S_{ij}\,xx^\top S_{ij}^\top$ is the only matrix in $\Bcal$ with $x_i^2$ appearing in position $(j,j)$. Moreover, for any $i<j<k$, $S_{ij}\,xx^\top S_{jk}^\top + S_{jk}\,xx^\top S_{ij}^\top$ is the only matrix in $\Bcal$ with $x_j^2$ in position $(i,k)$; $S_{ik}\,xx^\top S_{jk}^\top + S_{jk}\,xx^\top S_{ik}^\top$ is the only matrix with $x_k^2$ in position $(i,j)$; and $S_{ij}\,xx^\top S_{ik}^\top + S_{ik}\,xx^\top S_{ij}^\top$ is the only matrix with $x_i^2$ in position $(j,k)$. Finally, for any $i<j<k<l$, $S_{ij}\,xx^\top S_{kl}^\top + S_{kl}\,xx^\top S_{ij}^\top$ is the only matrix in $\Bcal$ with $x_ix_l$ in position $(j,k)$; and $S_{ik}\,xx^\top S_{jl}^\top + S_{jl}\,xx^\top S_{ik}^\top$ is the only matrix with $x_kx_l$ in position $(i,j)$. These observations imply that $\Bcal$ is a linearly independent set, as claimed. Hence~\eqref{T:charC:1} is proved.

It remains to show that $\dim\Kcal={d\choose 4}$. To see this, observe that $\Kcal$ is the kernel of the linear map
\[
\S^m \to \Ccal, \qquad H\mapsto c_H.
\]
By \eqref{T:charC:1}, this map is surjective, and the dimension of its kernel is
\[
\dim\S^m-\dim\Ccal={m+1\choose 2}-2{d\choose 4} - 3{d\choose 3} - {d\choose 2}={d\choose 4},
\]
where the last equality follows from a direct calculation.

\subsection{The space $\Kcal$ and the Pl\"ucker relations}\label{S:setK}

The goal is now to develop a better understanding of the space $\Kcal$ in~\eqref{eq:DK}. The main outcome is Lemma~\ref{L:M=N} below, which is crucial for the proof of Theorem~\ref{T:sos}. The lemma shows in particular that the set $\Kcal$ can be identified with the Pl\"ucker relations from algebraic geometry.

Let $m={d\choose 2}=\dim\Skew(d)$. Each matrix $H=(h_{pq})\in\S^m$ can be viewed as a symmetric linear map on $\Skew(d)$, acting on matrices $A=(a_{ij})\in\Skew(d)$ by the formula
\[
H[A]_{ij} = \sum_{k<l} h_{\pi(i,j),\pi(k,l)}a_{kl}, \qquad i<j,
\]
with the remaining components of $H[A]$ given by skew-symmetry. Here the map $\pi$ lists strictly upper-triangular matrix elements in lexicographic order,
\[
\begin{array}{r|cccccccccc}
(i,j)	&(1,2)	&(1,3)	&(1,4)	&\cdots&(1,d)	&(2,3)	&\cdots&(2,d)	&\cdots&(d-1,d) \\[1ex]
\hline \\[-2ex]
\pi(i,j)&1 		&2		&3		&\cdots&d-1	&d		&\cdots&2d-3	&\cdots&m={d\choose 2}
\end{array}
\]
The symmetry property $\langle A,H[B]\rangle=\langle H[A],B\rangle$ is immediate from the symmetry of $H$ as a matrix. Using this notation, the biquadratic form corresponding to an element $c_H\in\Ccal$, where $H\in\S^m$, can be expressed as
\begin{equation} \label{eq:yCHxy}
y^\top c_H(x) y = \frac{1}{2}\langle A, H[A] \rangle \qquad \text{where}\qquad A=xy^\top - yx^\top.
\end{equation}
This follows from~\eqref{eq:cH(x)} using that $x^\top D_{\pi(i,j)}y=x_iy_j-x_jy_i$. Together with Lemma~\ref{L:rank2} below this immediately yields
\begin{equation} \label{eq:KSkew2}
\Kcal = \left\{ K\in \S^m \colon \langle A, K[A]\rangle = 0 \text{ for all } A\in\Skew(2,d) \right\}.
\end{equation}
In particular, this shows that $\Ccal \cong \S^m/\Kcal$ can be identified with the set of restrictions to $\Skew(2,d)$ of quadratic forms on $\Skew(d)$.

\begin{lemma} \label{L:rank2}
Let $A\in\Skew(d)$. Then $\rk A=2$ if and only if $A=xy^\top-yx^\top$ for some linearly independent $x,y\in\R^d$.
\end{lemma}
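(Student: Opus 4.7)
The plan is to prove the two implications separately, the ``if'' by a direct rank computation and the ``only if'' by exploiting the standard normal form for real skew-symmetric matrices.

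For the ``if'' direction, suppose $A=xy^\top-yx^\top$ for linearly independent $x,y\in\R^d$. The image of $A$ is clearly contained in $\mathrm{span}(x,y)$, so $\rk A\le 2$. To see the reverse inequality, I would compute
\[
Ax=(x^\top y)x-\|x\|^2 y, \qquad Ay=\|y\|^2 x-(x^\top y)y,
\]
so $(Ax,Ay)=(x,y)M$ with $M=\begin{pmatrix}x^\top y & \|y\|^2\\ -\|x\|^2 & -x^\top y\end{pmatrix}$, whose determinant $\|x\|^2\|y\|^2-(x^\top y)^2$ is strictly positive by the strict Cauchy--Schwarz inequality. Hence $\{Ax,Ay\}$ is a basis for $\mathrm{span}(x,y)$, giving $\rk A=2$.

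For the converse, suppose $\rk A=2$ and let $V=\mathrm{Image}(A)$. Since $A$ is skew-symmetric, $\mathrm{Image}(A)=\ker(A)^\perp$, and moreover $V$ is $A$-invariant: for $v\in V$ and $w\in\ker A$ we have $\langle Av,w\rangle=-\langle v,Aw\rangle=0$, so $Av\in V$. Thus $A|_V$ is an invertible skew-symmetric operator on the two-dimensional Euclidean space $V$; picking any orthonormal basis $\{e_1,e_2\}$ of $V$, its matrix takes the form $\begin{pmatrix}0&\lambda\\-\lambda&0\end{pmatrix}$ for some $\lambda\ne 0$.

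It then remains to verify that $x=\lambda e_1$ and $y=e_2$ do the job. Since $e_1,e_2\in V=\ker(A)^\perp$, for every $w\in\ker A$ one has $(xy^\top-yx^\top)w=0$, matching $Aw$. On $V$, one checks directly that $(xy^\top-yx^\top)e_1=-\lambda e_2=Ae_1$ and $(xy^\top-yx^\top)e_2=\lambda e_1=Ae_2$. Since $x$ and $y$ are orthogonal and nonzero, they are linearly independent, completing the proof. The only substantive input is the normal form for real skew-symmetric operators on a two-dimensional space, which is elementary; so there is no real obstacle here and the lemma is essentially bookkeeping.
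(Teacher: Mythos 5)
Your proof is correct, and the argument for the ``only if'' direction is essentially the one in the paper: both pick an orthonormal basis of the two-dimensional range and use skew-symmetry to see that $A$ acts on it as a rotation-type matrix, which yields the representation. The only difference is cosmetic — the paper packages the verification through the projection identity $A=(xx^\top+yy^\top)A$, and it treats the ``if'' direction (which you prove carefully via the Cauchy--Schwarz determinant computation) as obvious.
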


\begin{proof}
Only the necessity needs proof, so pick $A\in\Skew(2,d)$. Any rank-two matrix $A$ can be written $A=xx^\top A + yy^\top A$, where $\{x,y\}$ is an orthonormal basis for the range of $A$. By skew-symmetry of $A$, we have $A^\top x=\gamma y$ and $A^\top y=-\gamma x$, where $\gamma=x^\top Ay$. Thus $A=(\gamma x)y^\top - y(\gamma x)^\top$, which is of the stated form.
\end{proof}

For each ordered $4$-tuple $i<j<k<l$ of indices from $\{1,\ldots,d\}$, define a polynomial $P_{ijkl}$ in the entries of $A\in\Skew(d)$ by
\[
P_{ijkl}(A) = a_{ij}a_{kl} - a_{ik}a_{jl} + a_{il}a_{jk}.
\]
These polynomials generate the variety of rank-two skew-symmetric matrices: all $P_{ijkl}$ vanish at a nonzero $A\in\Skew(d)$ if and only if $\rk A=2$. Moreover, they constitute a basis for the homogeneous ideal generated by this variety: any homogeneous polynomial vanishing on~$\Skew(2,d)$ is a polynomial linear combination of the $P_{ijkl}$. The classical use of this fact is to give the Grassmannian manifold of two-dimensional subspaces of $\R^d$ the structure of a projective variety. This is achieved via the {\em Pl\"ucker embedding}, which maps a subspace $V={\rm span}\{x,y\}$ of~$\R^d$ to $A=xy^\top-yx^\top \in\Skew(d)$. Note that, up to scaling, $A$ only depends on $V$, not on the choice of spanning vectors $x$ and $y$. Conversely, any $A\in\Skew(2,d)$ corresponds to a unique two-dimensional subspace $V\subset\R^d$ due to Lemma~\ref{L:rank2}. The entries $a_{ij}$ of $A$ are called the {\em Pl\"ucker (or Grassmann) coordinates} of~$V$, and the quadratic equations $P_{ijkl}(A)=0$ that they satisfy are called the {\em Pl\"ucker relations}. For us, the relevant result is the following:

\begin{lemma} \label{L:Pijkl}
The polynomials $P_{ijkl}$ have the following properties:
\begin{enumerate}
\item\label{L:Pijkl:1} $\Skew(2,d) = \left\{ A \in \Skew(d)\setminus\{0\} \colon P_{ijkl}(A)=0 \text{ for all } i<j<k<l\right\}$.
\item\label{L:Pijkl:2} Let $P(A)$ be a homogeneous polynomial in the entries of $A\in\Skew(d)$. Then $P(A)=0$ for all $A\in\Skew(2,d)$ if and only if it is of the form
\[
P(A) = \sum_{i<j<k<l} Q_{ijkl}(A)P_{ijkl}(A)
\]
for some polynomials $Q_{ijkl}$.
\end{enumerate}
\end{lemma}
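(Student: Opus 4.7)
The plan is to identify $\Skew(d)$ with the space of bivectors $\Lambda^2\R^d$ via the correspondence $A=(a_{ij})\leftrightarrow\omega_A=\sum_{i<j}a_{ij}\,e_i\wedge e_j$. Under this identification, Lemma~\ref{L:rank2} tells us that the rank-2 elements of $\Skew(d)$ correspond exactly to the nonzero decomposable bivectors $x\wedge y$ with $x,y\in\R^d$ linearly independent. With this setup in place, the two parts of the lemma follow from standard exterior algebra combined with a classical result on the Grassmannian.

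For part~\ref{L:Pijkl:1}, I would first verify by a direct expansion that for $i<j<k<l$ the coefficient of $e_i\wedge e_j\wedge e_k\wedge e_l$ in $\omega_A\wedge\omega_A\in\Lambda^4\R^d$ equals $2P_{ijkl}(A)$. Hence the simultaneous vanishing of the $P_{ijkl}(A)$ is equivalent to $\omega_A\wedge\omega_A=0$. The classical fact I would invoke, or prove in two lines, is that a nonzero bivector satisfies this equation iff it is decomposable. The ``only if'' direction follows by bringing $A$ into the normal form $\sum_{i=1}^{r}\lambda_i\,e_{2i-1}\wedge e_{2i}$ (with nonzero $\lambda_i$ and $r=\tfrac12\rk A$) by an orthogonal change of basis; then
\[
\omega_A\wedge\omega_A = 2\sum_{i<j}\lambda_i\lambda_j\,e_{2i-1}\wedge e_{2i}\wedge e_{2j-1}\wedge e_{2j}
\]
vanishes iff $r\le 1$. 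The ``if'' direction is immediate from $(x\wedge y)\wedge(x\wedge y)=0$. Combined with Lemma~\ref{L:rank2}, this yields part~\ref{L:Pijkl:1}.

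Part~\ref{L:Pijkl:2} is the deeper classical statement that the Pl\"ucker relations generate the full homogeneous ideal of the Grassmannian $\mathrm{Gr}(2,d)$ under its Pl\"ucker embedding. A self-contained proof would proceed by the Hodge straightening law: one shows that modulo the ideal generated by the $P_{ijkl}$, every monomial in the variables $a_{ij}$ can be rewritten as a linear combination of ``standard'' monomials indexed by semistandard Young tableaux of rectangular shape $2\times k$, and then checks, by evaluation at decomposable bivectors, that these standard monomials are linearly independent as polynomial functions on $\Skew(2,d)$. A homogeneous polynomial vanishing on $\Skew(2,d)$ must therefore reduce to zero modulo the Pl\"ucker ideal, i.e.\ lie in it.

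The main obstacle is clearly part~\ref{L:Pijkl:2}: the combinatorics of the straightening law are nontrivial, and carrying them out in full would be a sizeable digression. The cleanest approach in practice is therefore to cite the result as a standard fact from algebraic geometry (for instance Harris, \emph{Algebraic Geometry: A First Course}, Examples~6.5--6.6, or Fulton, \emph{Young Tableaux}, Chapter~9) rather than reproduce the proof in detail.
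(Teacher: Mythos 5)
Your proposal is correct. The paper's own proof of this lemma is purely a citation: both parts are attributed to Hodge and Pedoe (Theorem~II on p.~312 for part~(i), Theorem~I on p.~315 for part~(ii)), with no argument given beyond the identification of $\Skew(2,d)$ with the Grassmannian of $2$-planes. For part~(ii) you end up in the same place---deferring to the standard-monomial/straightening-law literature---so there is no real difference there, and your honest assessment that reproducing the straightening law would be a sizeable digression matches the authors' choice. For part~(i), however, you give a genuinely different and self-contained argument: identifying $A$ with the bivector $\omega_A=\sum_{i<j}a_{ij}\,e_i\wedge e_j$, checking that the coefficient of $e_i\wedge e_j\wedge e_k\wedge e_l$ in $\omega_A\wedge\omega_A$ is $2P_{ijkl}(A)$ (the sign bookkeeping works out), and using the orthogonal normal form of a real skew-symmetric matrix to show $\omega_A\wedge\omega_A=0$ iff $\rk A\le 2$. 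Together with Lemma~\ref{L:rank2} this proves part~(i) without any external reference, which is arguably preferable to the paper's citation since it keeps the set-theoretic statement elementary and isolates the genuinely deep ingredient (that the Pl\"ucker quadrics generate the full homogeneous ideal, not merely cut out the variety) in part~(ii). The one thing to keep straight in your part~(ii) sketch is that the relevant claim is about the ideal of the affine cone $\Skew(2,d)\cup\{0\}$, which for homogeneous polynomials is equivalent to vanishing on $\Skew(2,d)$ itself; your proposed references do cover this.
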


\begin{proof}
Keeping in mind the identification of $\Skew(2,d)$ with the set of two-dimensional subspaces as above, part~\ref{L:Pijkl:1} follows from Theorem~II on page~312 in~\citet{hodge1994methods}, and part~\ref{L:Pijkl:2} follows from Theorem~I on page~315 in the same reference.
\end{proof}

The following lemma is the primary reason for studying $\Kcal$ in such detail; it is a crucial ingredient in the proof of Theorem~\ref{T:sos}. In particular, it shows that the set $\Kcal$ can be identified with the Pl\"ucker relations.

\begin{lemma} \label{L:M=N}
The set $\Kcal$ can be identified with the linear span of the polynomials $P_{ijkl}$, $i<j<k<l$. Moreover, one has
\begin{equation} \label{eq:L:M=N}
\Skew(2,d)=\left\{A\in\Skew(d)\setminus\{0\} \colon \langle A,K[A]\rangle=0 \text{ for all } K\in\Kcal\right\}.
\end{equation}
\end{lemma}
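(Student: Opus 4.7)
The plan is to view each $K\in\S^m$ as the symmetric matrix of a homogeneous degree-$2$ polynomial in the $m={d\choose 2}$ upper-triangular entries of $A\in\Skew(d)$, and then to apply Lemma~\ref{L:Pijkl} directly. Concretely, unwinding the definition of $K[\,\cdot\,]$ gives
$$\tfrac12\langle A,K[A]\rangle=\sum_{i<j,\ k<l}h_{\pi(i,j),\pi(k,l)}\,a_{ij}a_{kl},$$
which is the standard correspondence between $\S^m$ and quadratic forms in $m$ variables. Hence $K\mapsto\bigl(A\mapsto\langle A,K[A]\rangle\bigr)$ is a linear bijection between $\S^m$ and the space of homogeneous degree-$2$ polynomials in the variables $a_{ij}$, $i<j$.

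Under this bijection, the characterization~\eqref{eq:KSkew2} identifies $\Kcal$ with those quadratic forms that vanish on $\Skew(2,d)$. By Lemma~\ref{L:Pijkl}\ref{L:Pijkl:2}, any such polynomial lies in the ideal generated by the $P_{ijkl}$; since both it and the $P_{ijkl}$ are homogeneous of degree exactly $2$, the coefficients $Q_{ijkl}$ in that representation must be constants. Consequently $\Kcal$ corresponds exactly to the linear span of the $P_{ijkl}$, yielding the first assertion. As a dimensional sanity check, the ${d\choose 4}$ polynomials $P_{ijkl}$ with $i<j<k<l$ are linearly independent (the monomial $a_{ij}a_{kl}$ appears only in the $P_{ijkl}$ with this particular index set), which matches $\dim\Kcal={d\choose 4}$ from Theorem~\ref{T:charC}.

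For the second statement, the inclusion ``$\supseteq$'' is immediate: if $A\in\Skew(2,d)$, then Lemma~\ref{L:Pijkl}\ref{L:Pijkl:1} gives $P_{ijkl}(A)=0$ for all $i<j<k<l$, so by linearity every element of $\Kcal$ annihilates $A$. Conversely, if $A\in\Skew(d)\setminus\{0\}$ satisfies $\langle A,K[A]\rangle=0$ for all $K\in\Kcal$, then in particular $P_{ijkl}(A)=0$ for every $i<j<k<l$, and Lemma~\ref{L:Pijkl}\ref{L:Pijkl:1} places $A$ in $\Skew(2,d)$. I do not anticipate any genuine obstacle: the whole lemma reduces to the classical algebraic-geometry fact (Lemma~\ref{L:Pijkl}) that the Pl\"ucker relations generate the homogeneous ideal cutting out $\Skew(2,d)$, combined with the routine identification of symmetric matrices with quadratic forms. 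The only mildly delicate bookkeeping step is verifying that the expansion of $\langle A,K[A]\rangle$ in terms of the independent coordinates $a_{ij}$ ($i<j$) produces each unordered pair $\{(i,j),(k,l)\}$ with the right multiplicity so that the correspondence is a bijection, not just an injection.
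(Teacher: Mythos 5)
Your proof is correct and follows essentially the same route as the paper: both identify $\Kcal$ via~\eqref{eq:KSkew2} with the quadratic forms vanishing on $\Skew(2,d)$, invoke Lemma~\ref{L:Pijkl}\ref{L:Pijkl:2} plus a degree count to see these are exactly the linear combinations of the $P_{ijkl}$, and then deduce~\eqref{eq:L:M=N} from Lemma~\ref{L:Pijkl}\ref{L:Pijkl:1}. Your explicit remarks on the bijection $\S^m\leftrightarrow\{\text{quadratic forms in }a_{ij}\}$ and on the constancy of the $Q_{ijkl}$ merely spell out details the paper leaves implicit.
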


\begin{proof}
Lemma~\ref{L:Pijkl}\ref{L:Pijkl:2} together with~\eqref{eq:KSkew2} implies that $K\in\Kcal$ holds if and only if the polynomial $P(A)=\langle A,K[A]\rangle$ is a linear combination of the $P_{ijkl}(A)$. The latter polynomials can therefore be identified with a basis for $\Kcal$. It follows that the right-hand side of~\eqref{eq:L:M=N} is equal to $\left\{ A \in \Skew(d)\setminus\{0\} \colon P_{ijkl}(A)=0 \text{ for all } i<j<k<l\right\}$. This, in turn, is equal to $\Skew(2,d)$ by Lemma~\ref{L:Pijkl}\ref{L:Pijkl:1}.
\end{proof}

\begin{remark} \label{R:Kbasis}
The identification of $\Kcal$ with the linear span of the polynomials $P_{ijkl}$ leads to an explicit description of a basis for $\Kcal$. Indeed, one has
\[
P_{ijkl}(A) = \frac{1}{4}\langle A, K_{(i,j,k,l)}[A]\rangle,
\]
where $K_{(i,j,k,l)}$ is the linear map on $\Skew(d)$ characterized by
\begin{align*}
K_{(i,j,k,l)}[A]_{ij}	&=a_{kl}\\
K_{(i,j,k,l)}[A]_{ik}	&=-a_{jl}\\
K_{(i,j,k,l)}[A]_{il}	&=a_{jk}\\
K_{(i,j,k,l)}[A]_{jk}	&=a_{il}\\
K_{(i,j,k,l)}[A]_{jl}	&=-a_{ik}\\
K_{(i,j,k,l)}[A]_{kl}	&=a_{ij}\\
K_{(i,j,k,l)}[A]_{rs}	&=0\text{ if $\{r,s\}\not\subseteq\{i,j,k,l\}$.}
\end{align*}
The collection of all such $K_{(i,j,k,l)}$ for $i<j<k<l$ thus constitutes a basis for $\Kcal$. Note that there are ${d\choose 4}$ ways of choosing the indices $i,j,k,l$, which is in agreement with the dimension of $\Kcal$ as given in Theorem~\ref{T:charC}.
\end{remark}

\subsection{Proof of Theorem~\ref{T:sos}\ref{T:sos:1}}  \label{S:proof_SOS1}

A preliminary result is required for the proof of Theorem~\ref{T:sos}\ref{T:sos:1}. Recall that an {\em affine subspace} of a vector space is a subset $\Acal$ such that $x,y\in\Acal$, $\lambda\in\R$ implies $\lambda x+(1-\lambda)y\in\Acal$. If $\Acal$ is an affine subspace and $x\in\Acal$, then $\Acal-x$ is a linear subspace that does not depend on~$x$. The {\em dimension} of $\Acal$ is the dimension of $\Acal-x$. The following result is Theorem~1.1 in \citet{barvinok2001remark}; see Theorem~1.3 and its reformulation~(2.2) in \citet{barvinok1995problems} for a proof.

\begin{lemma}\label{L:rank1}
Let $\Acal\subset\S^m$ be an affine subspace such that the intersection $\Acal\cap\S^m_+$ is nonempty and $\codim\Acal\le{r+2\choose 2}-1$ for some $r\in\N$. Then there exists some $B\in\Acal\cap\S^m_+$ with $\rk B\le r$.
\end{lemma}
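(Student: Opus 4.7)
The plan is to prove this via a standard rank-reduction argument. First, I will select $B\in\Acal\cap\S^m_+$ of minimum rank and denote this rank $k$; such a $B$ exists since rank takes values in the finite set $\{0,1,\ldots,m\}$ and $\Acal\cap\S^m_+$ is nonempty by hypothesis. The goal will then be to show $k\le r$, which I plan to achieve by deriving a contradiction from the assumption $k\ge r+1$.

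Assuming $k\ge r+1$, an orthogonal change of basis brings $B$ to the block form
\[
B=\begin{pmatrix}B' & 0\\ 0 & 0\end{pmatrix}
\]
with $B'\in\S^k$ strictly positive definite. I will then consider the linear subspace
\[
V=\left\{\begin{pmatrix}H & 0\\ 0 & 0\end{pmatrix}:H\in\S^k\right\}\subset\S^m,
\]
which has dimension $\binom{k+1}{2}\ge\binom{r+2}{2}$. Since $\Acal-B$ has codimension at most $\binom{r+2}{2}-1$ in $\S^m$, a routine dimension count yields
\[
\dim\bigl(V\cap(\Acal-B)\bigr)\ge\binom{k+1}{2}-\codim\Acal\ge\binom{r+2}{2}-\Bigl(\binom{r+2}{2}-1\Bigr)=1.
\]
In particular, I can pick a nonzero $H\in\S^k$ such that $B+t\begin{pmatrix}H & 0\\ 0 & 0\end{pmatrix}\in\Acal$ for every $t\in\R$.

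To conclude, I will use strict positivity of $B'$ together with $H\ne 0$ to observe that the symmetric matrix $B'^{-1/2}HB'^{-1/2}$ has a nonzero eigenvalue, so there is a smallest $|t^*|>0$ at which $B'+t^*H$ first loses invertibility while remaining positive semidefinite. The corresponding element $B+t^*\begin{pmatrix}H & 0\\ 0 & 0\end{pmatrix}\in\Acal\cap\S^m_+$ then has rank strictly below $k$, contradicting minimality. The overall structure mirrors the classical Pataki rank bound for extreme points of spectrahedra, which is tight precisely when $\codim\Acal=\binom{r+2}{2}-1$; the only subtlety worth flagging is the existence of a finite $t^*$, which is guaranteed since a nonzero symmetric matrix must have a positive or negative eigenvalue by the spectral theorem. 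I do not anticipate further obstacles.
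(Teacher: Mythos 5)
Your proof is correct. Note that the paper itself does not prove this lemma at all: it simply cites Barvinok (2001, Theorem~1.1) and points to Barvinok (1995) for a proof. What you have written is a complete, self-contained derivation via the standard Pataki--Barvinok rank-reduction argument, and every step checks out: the minimum-rank element exists because rank takes finitely many values; the dimension count $\dim\bigl(V\cap(\Acal-B)\bigr)\ge\binom{k+1}{2}-\codim\Acal\ge 1$ is valid since $\Acal-B$ is a genuine linear subspace of codimension $\codim\Acal$, so the perturbation line $t\mapsto B+t\bigl(\begin{smallmatrix}H&0\\0&0\end{smallmatrix}\bigr)$ stays in $\Acal$ for all $t\in\R$; and the factorization $B'+tH=B'^{1/2}(I+tB'^{-1/2}HB'^{-1/2})B'^{1/2}$ guarantees a finite boundary point $t^*$ of the interval $\{t:B'+tH\succ0\}$ (the interval cannot be all of $\R$ because $B'^{-1/2}HB'^{-1/2}$ is nonzero symmetric, hence has a nonzero eigenvalue), at which the rank strictly drops while positive semidefiniteness is preserved by continuity. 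The only cosmetic point worth making explicit in a final write-up is that the orthogonal change of basis $X\mapsto Q^\top XQ$ preserves $\S^m_+$, ranks, and the codimension of $\Acal$, so the block-diagonal normalization of $B$ is indeed without loss of generality. Your argument is essentially the one in Barvinok's references, so you have not so much taken a different route as filled in a proof the paper outsources; this is a worthwhile addition since it makes the key step of Theorem~\ref{T:sos}\ref{T:sos:1} self-contained.
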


We now proceed with the proof of Theorem~\ref{T:sos}\ref{T:sos:1}. We need to show that, for any $H\in\S^m$,
\begin{equation}\label{eq:dleq4:1}
\text{$y^\top c_H(x) y\ge 0$ for all $x,y\in\R^d$} \qquad\text{implies}\qquad\text{$(H+\Kcal)\cap\S^m_+\ne\emptyset$.}
\end{equation}
Indeed, then any $c_H\in\Ccal_+$ is equal to $c_{H+K}$ for some $K\in\Kcal$ such that $H+K\in\S^m_+$.

We prove the contrapositive. Thus, consider any $H\in\S^m$ such that $(H+\Kcal)\cap\S^m_+ =\emptyset$. We need to prove that $y^\top c_H(x) y<0$ for some $(x,y)\in\R^d\times\R^d$. The separating hyperplane theorem gives some $B\in\S^m$ such that $\langle C,B\rangle\ge 0$ for all $C\in\S^m_+$ and $\langle H+K,B\rangle<0$ for all $K\in\Kcal$. The former inequality yields $B\in\S^m_+$ by self-duality of $\S^m_+$. The latter inequality yields $\langle K,B\rangle =0$ for all $K\in\Kcal$, since otherwise $\langle H+tK,B\rangle$ would become positive for some $K\in\Kcal$ and some sufficiently large~$t$. Thus, after scaling $B$ if necessary, we have $B\in\Kcal^\perp\cap\S^m_+$ and $\langle H,B\rangle=-1$. Hence $\Acal\cap\S^m_+\ne\emptyset$ for the affine subspace
\[
\Acal = \left\{ C\in \S^m \colon C\in\Kcal^\perp \text{ and } \langle C,H\rangle = -1 \right\}.
\]
Note that $\Acal-B=\{C\in\S^m \colon C\in\Kcal^\perp \text{ and } \langle C,H\rangle = 0\}$. Our initial assumption excludes $H\in\Kcal$, since otherwise we would have $0\in(H+\Kcal)\cap\S^m_+$. Thus $\codim\Acal=1+\dim\Kcal$, so that $\codim\Acal=2$ for $d=4$, and $\codim\Acal=1$ for $d\in\{2,3\}$. In either case, $\codim\Acal\le{3\choose 2}-1$. Applying Lemma~\ref{L:rank1} with $r=1$ then yields a rank-one matrix $zz^\top$, $z\in\R^m\setminus\{0\}$, in the intersection $\Acal\cap\S^m_+$. In particular, $z^\top K z=0$ for all $K\in\Kcal$. Identifying $z$ with the skew-symmetric matrix $A$ whose entries are given by $a_{ij}=z_{\pi(i,j)}$ for $i<j$, this says that $\langle A,K[A]\rangle=0$ for all $K\in\Kcal$. Thus $\rk A=2$ by Lemma~\ref{L:M=N}, whence $A=xy^\top-yx^\top$ for some $x,y\in\R^d$ by Lemma~\ref{L:rank2}. Consequently, \eqref{eq:yCHxy} yields $y^\top c_H(x)y=\langle A,H[A]\rangle/2=z^\top H z=\langle zz^\top,H\rangle=-1$, as required. Theorem~\ref{T:sos}\ref{T:sos:1} is proved.

\begin{remark}
A key step of the proof is to apply Lemma~\ref{L:rank1} to obtain a rank one element of $\Acal\cap\S^m_+$. This is where the proof breaks down for $d=5$. Indeed, Lemma~\ref{L:rank1} then only provides a rank three element, which is not enough to continue the proof.
\end{remark}

\subsection{Proof of Theorem~\ref{T:sos}\ref{T:sos:2}} \label{S:T:sos:2}

We will use the construction given in~\citet{laszlo2010sum} to obtain an example for $d=6$. In order to make the presentation self-contained, we summarize the construction here. Note that it suffices to consider the case $d=6$. Indeed, suppose $f(x_1,\ldots,x_6,y_1,\ldots,y_6)$ is a nonnegative biquadratic form in $6+6$ variables vanishing on the diagonal that is not a sum of squares. For $d>6$, define 
\[
g(x,y)=f(x_1,\ldots,x_6,y_1,\ldots,y_6).
\]
The left-hand side is then a nonnegative biquadratic form in $d+d$ variables vanishing on the diagonal that is not a sum of squares.

From now on we thus take $d=6$ and hence $m=15$. In view of Lemma~\ref{L:sos} we must find $H\in\S^{15}$ such that $y^\top c_H(x) y\ge0$ for all $x,y$, but at the same time
\begin{equation}\label{eq:HpKcSe}
(H+\Kcal)\cap\S^{15}_+=\emptyset;
\end{equation}
in other words, we must find a counterexample to~\eqref{eq:dleq4:1}.  A candidate $H$ is obtained from the inequality
\begin{equation} \label{eq:BQineq}
2(\|X\|^2\|Y \|^2-\tr (X^\top Y )^2)-\|XY-YX\|^2 \ge 0,
\end{equation}
which is valid for any $d\times d$ matrices~$X$ and~$Y$; see for instance \citet[Theorem~2.2]{bottcher2008frobenius}. Here $\|X\|=\sqrt{\tr(X^\top X)}$ is the Frobenius norm. Note also that the left-hand side defines a biquadratic form vanishing on the diagonal. Consider now the special case where for $x,y\in\R^6$,
\[
X=\begin{pmatrix}x_1&x_2&x_3\\0&x_6&x_4\\0&0&x_5\end{pmatrix},\qquad
Y=\begin{pmatrix}y_1&y_2&y_3\\0&y_6&y_4\\0&0&y_5\end{pmatrix}.
\]
As in Section~\ref{S:setK} we use the notation
\[
A=xy^\top-yx^\top.
\]
One then easily verifies the identities $2(\|X\|^2\|Y \|^2-\tr^2 (X^\top Y ))=\|A\|^2$ and $\|XY-YX\|^2=(a_{12}+a_{26})^2+(a_{45}-a_{46})^2+(a_{13}+a_{24}+a_{35})^2$. Furthermore, Lemma~\ref{L:Pijkl}\ref{L:Pijkl:1} yields $P_{1234}(A)=P_{2345}(A)=0$. In view of~\eqref{eq:BQineq} it follows that the function
\[
f(x,y)=\|A\|^2-(a_{12}+a_{26})^2-(a_{45}-a_{46})^2-(a_{13}+a_{24}+a_{35})^2-P_{1234}(A)-P_{2345}(A)
\]
is a nonnegative biquadratic form vanishing on the diagonal. The correspondence established in~\eqref{eq:yCHxy} then yields
\[
f(x,y)=y^\top c_H(x)y,
\]
where the matrix $H\in\S^{15}$ can be found to be
\[
H=\frac{1}{2}\left(\begin{array}{*{20}c}
2 & 0 & 0 & 0 & 0 & 0 & 0 & 0 & -2& -1 & 0 & 0 & 0  & 0 & 0\\
0 & 2 & 0 & 0 & 0 & 0 & -1 & 0 & 0 & 0 & -2 & 0 & 0  & 0& 0\\
0 & 0 & 4 & 0 & 0 & -1 & 0 & 0 & 0& 0 & 0 & 0 & 0  & 0& 0\\
0 & 0 & 0 & 4 & 0 & 0 & 0 & 0 & 0& 0 & 0 & 0 & 0  & 0& 0\\
0 & 0 & 0 & 0 & 4 & 0 & 0 & 0 & 0& 0 & 0 & 0 & 0  & 0& 0\\
0 & 0 & -1 & 0 & 0 & 4 & 0 & 0 & 0& 0 & 0 & 0 & -1  & 0& 0\\
0 & -1 & 0 & 0 & 0 & 0 & 2 & 0 & 0& 0 & -1 & 0 & 0  & 0& 0\\
0 & 0 & 0 & 0 & 0 & 0 & 0 & 4 & 0& -1 & 0 & 0 & 0  & 0& 0\\
-2 & 0 & 0 & 0 & 0 & 0 & 0 & 0 & 2& 0 & 0 & 0 & 0  & 0& 0\\
-1 & 0 & 0 & 0 & 0 & 0 & 0 & -1 & 0& 4 & 0 & 0 & 0  & 0& 0\\
0 & -2 & 0 & 0 & 0 & 0 & -1 & 0 & 0& 0 & 2 & 0 & 0  & 0& 0\\
0 & 0 & 0 & 0 & 0 & 0 & 0 & 0 & 0& 0 & 0 & 4 & 0  & 0& 0\\
0 & 0 & 0 & 0 & 0 & -1 & 0 & 0 & 0& 0 & 0 & 0 & 2  & 2& 0\\
0 & 0 & 0 & 0 & 0 & 0 & 0 & 0 & 0& 0 & 0 & 0 & 2  & 2& 0\\
0 & 0 & 0 & 0 & 0 & 0 & 0 & 0 & 0& 0 & 0 & 0 & 0  & 0& 4\\
\end{array}\right).
\]
The characteristic polynomial of $H$ is given by
\[
\det(s\,\Id - H)=2^{-5}(s-2)^7\,(2s^2 - 2s - 1)\,(4s^3 - 16s^2 + 14s + 1)^2.
\]
Here $2s^2 - 2s - 1$ has one single negative root $\lambda=(1-\sqrt{3})/2$. Moreover, the polynomial $\phi(s)=4s^3 - 16s^2 + 14s + 1$ has one single negative root $\mu$, since $\phi(0)=1>0$, $\phi(2)=-3<0$, $\lim_{s\to\infty}\phi(s)=\infty$, and $\lim_{s\to-\infty}\phi(s)=-\infty$. We now define the three vectors
\begin{equation*}
\begin{aligned}
v_1&=\frac{1}{2} e_2-\lambda \, e_7+\frac{1}{2} e_{11}\\
v_2&=\frac{\mu}{2} e_3+\mu(2-\mu)\, e_6+\frac{1}{2}(\mu-1) \, e_{13}+\frac{1}{2} e_{14}\\
v_3&=\frac{1}{2}(1-\mu)\, e_1-\frac{\mu}{2} \, e_8+\frac{1}{2} e_{9}+\mu(\mu-2)\, e_{10},
\end{aligned}
\end{equation*}
where $e_i$ denotes the $i$th canonical basis vector in $\R^{15}$. A calculation shows that $v_1$ is an eigenvector of $H$ corresponding to the eigenvalue $\lambda$, and $v_2$, $v_3$ are linearly independent eigenvectors of $H$ corresponding to the eigenvalue $\mu$. We now define the positive semidefinite matrix
\[
B = \delta v_1v_1^\top + v_2v_2^\top + v_3v_3^\top,
\]
where $\delta=\mu(\mu-2)(2\mu-1)/\lambda>0$. This matrix satisfies
\begin{equation}\label{eq:HandW}
\langle H, B\rangle = \tr(HB) = \lambda\delta\|v_1\|^2 + \mu \|v_2\|^2 + \mu\|v_3\|^2 <0.
\end{equation}
In addition, by considering each of the ${6\choose 4}=15$ basis elements $K_{(i,j,k,l)}$ of $\Kcal$ described in Remark~\ref{R:Kbasis}, one can check by direct computation that
\begin{equation}\label{eq:KandW}
\langle K, B\rangle=0 \quad \text{for all} \quad K\in\Kcal.
\end{equation}
Since $B$ is nonzero and positive semidefinite, \eqref{eq:HandW} and~\eqref{eq:KandW} imply \eqref{eq:HpKcSe}. This completes the proof of Theorem~\ref{T:sos}\ref{T:sos:2}.

\begin{remark}
The condition~\eqref{eq:HpKcSe} and its relation to sum of squares representations can also be expressed via the following primal-dual pair of semidefinite programs:
\begin{align}
v(P)=&\min_{B\in\S^m}\left\{ \langle H,B\rangle \colon B\succeq0,\ \langle\Id,B\rangle=1,\ \langle K,B\rangle=0\text{ for all }K\in\Kcal \right\} \label{eq:primalSDP}\\[2ex]
v(D)=&\max_{\substack{\lambda\in\R \\ K\in\Kcal}}\left\{ \lambda \colon H-\lambda\Id+K \succeq 0 \right\}.
\end{align}
Note that $v(D)\ge0$ if and only if $(H+\Kcal)\cap\S^{15}_+\ne\emptyset$. Moreover, weak duality $v(D)\le v(P)$ always holds. Indeed, for any primal feasible solution $B$ and dual feasible solution $(\lambda, K)$ one has $0\le \langle B,H-\lambda\Id+K\rangle = \langle H,B\rangle - \lambda$. Thus, for a given candidate $c=c_H$, a primal feasible solution $B$ with $\langle H,B\rangle<0$ provides a certificate that no sum of squares representation can exist. Furthermore, \eqref{eq:HandW} and~\eqref{eq:KandW} imply that the matrix $B/\tr(B)$ is thus a feasible solution to the primal problem~\eqref{eq:primalSDP} with negative value, and hence certifies that no sum of squares representation can exist.
\end{remark}

\begin{remark}
One can write down the component functions of the resulting map $c=c_H$:
\begin{align*}
c_{11}(x) &= x_2^2+x_3^2+2x_4^2+2x_5^2+2x_6^2			&c_{33}(x) &= x_1^2+2x_1x_5+2x_2^2+2x_4^2+x_5^2+2x_6^2\\
c_{12}(x) &= -x_1x_2-x_2x_6-x_3x_4					&c_{34}(x) &= -x_1x_2+x_2x_5-2x_3x_4\\
c_{13}(x) &= -x_1x_3-x_3x_5							&c_{35}(x) &= -x_1x_3-x_3x_5\\
c_{14}(x) &= x_2x_3-2x_1x_4							&c_{36}(x) &= -2x_3x_6\\
c_{15}(x) &= -2x_1x_5+x_3^2							&c_{44}(x) &= 2x_1^2+x_2^2+2x_3^2+x_5^2+2x_5x_6+x_6^2\\
c_{16}(x) &= -2x_1x_6+x_2^2							&c_{45}(x) &= -x_2x_3-x_4x_5-x_4x_6\\
c_{22}(x) &= x_1^2+2x_1x_6+2x_3^2+x_4^2+2x_5^2+x_6^2	&c_{46}(x) &= -x_4x_5-x_4x_6\\
c_{23}(x) &= x_1x_4-2x_2x_3-x_4x_5					&c_{55}(x) &= 2x_1^2+2x_2^2+x_3^2+x_4^2+2x_6^2\\
c_{24}(x) &= -x_2x_4								&c_{56}(x) &= x_4^2-2x_5x_6\\
c_{25}(x) &= x_3x_4-2x_2x_5							&c_{66}(x) &= 2x_1^2+x_2^2+2x_3^2+x_4^2+2x_5^2\\
c_{26}(x) &= -x_1x_2-x_2x_6
\end{align*}
\end{remark}

\begin{remark}
A natural question is whether $c=c_H$ is non-degenerate on the sphere, that is, if all eigenvalues are strictly positive except the one corresponding to the eigenvector $x$. This turns out not to be the case. With $x_2=x_4=x_6=0$ and under the restriction $\|x\|=1$, the eigenvalues of $c(x)$ are $0,(x_1+x_5)^2,2-x_1^2,2-x_5^2,2,2$. Thus $c(x)$ is degenerate along the curve $\Scal^{d-1}\cap\{x_2=x_4=x_6=0\}\cap\{x_1+x_5=0\}$. In particular, we do not know whether the corresponding diffusion on the sphere can be realized as the solution to an SDE for which pathwise uniqueness holds.
\end{remark}

\bibliographystyle{plainnat}
\bibliography{bibl}

\end{document}